\newcommand{\vdim}{\operatorname{vdim}}
\newcommand{\ttvdim}{\operatorname{tvdim}}
\newcommand{\edim}{\operatorname{edim}}
\newcommand{\ttedim}{\operatorname{tedim}}
\newcommand{\rank}{\operatorname{rank}}
\newcommand{\Hom}{\operatorname{Hom}}
\newcommand{\Aut}{\operatorname{Aut}}
\newcommand{\Cl}{\operatorname{Cl}}
\newcommand{\Spec}{\operatorname{Spec}}
\newcommand{\Id}{\operatorname{Id}}
\newcommand{\Cox}{\operatorname{Cox}}
\newcommand{\tedim}{\operatorname{tedim}}
\newcommand{\WDiv}{\operatorname{WDiv}}
\newcommand{\PDiv}{\operatorname{PDiv}}
\newcommand{\Irr}{\operatorname{Irr}}
\newcommand{\pp}{\mathbb{P}}
\newcommand{\qq}{\mathbb{Q}}
\newcommand{\zz}{\mathbb{Z}}
\newcommand{\kk}{\mathbb{K}}
\newtheorem{introthm}{Theorem}
\newtheorem{theorem}{Theorem}[section]
\newtheorem{proposition}[theorem]{Proposition}
\newtheorem{corollary}[theorem]{Corollary}
\theoremstyle{definition}
\newtheorem{notation}[theorem]{Notation}
\newtheorem{definition}[theorem]{Definition}
\newtheorem{example}[theorem]{Example}
\newtheorem{remark}[theorem]{Remark}
\theoremstyle{remark}
\numberwithin{equation}{section}
\begin{document}

\title{On a Notion of Toric Special Linear Systems}
\author[J.~Moraga]{Joaqu\'in Moraga}
\address{
Department of Mathematics, University of Utah, 155 S 1400 E, 
Salt Lake City, UT 84112}
\email{moraga@math.utah.edu}

\subjclass[2010]{Primary 14C20, 
Secondary 14M25. 
}

\maketitle

\medskip 

\begin{abstract}
In this note, we study linear systems on 
complete toric varieties $X$
with an invariant point whose orbit under the action of $\Aut(X)$
contains the dense torus $T$ of $X$.
We give a characterization of such varieties in terms
of its defining fan and introduce a new definition of expected dimension
of linear systems which consider the contribution
given by certain toric subvarieties.
Finally, we study degenerations of linear systems on these toric varieties 
induced by toric degenerations.
\end{abstract}

\setcounter{tocdepth}{1}
\tableofcontents

\section*{Introduction}

Given a complete algebraic variety $X$, 
the problem of determining the dimension
of the linear system of hypersurfaces of $X$
of a given divisor class passing through finitely
many points in very general position
with prescribed multiplicities is an open problem in algebraic geometry.
In the case of $X=\pp^2$, the Segre-Harbourne-Gimigliano-Hirschowitz conjecture ~\cite{Gi,Hi,Ha,Se}
predicts the dimension of such linear systems.
For $X=\pp^3$ there is an analogous conjecture (see  ~\cite{LU1,LU2}).
The case $X=\pp^n$ is studied in ~\cite{BDP}, 
where the authors prove that the linear subspaces 
passing through the general points can give contribution
to the dimension of the linear system when they are contained
in the base locus with certain multiplicities.
There are also results in another kind of varieties,
for example Hirzebruch surfaces and
products of projective lines (see ~\cite{Dum10} and ~\cite{LM14}).
In any variety of dimension equal or greater than two, 
the problem remains open. 

In this note, we study the above problem for $X$ a complete toric variety
admitting an automorphism which maps a very general point into an invariant one.
We will denote by $\Cl(X)$ the divisor class group of $X$.
Given a class $[D]\in \Cl(X)$, $k$ points $p_1, \dots, p_k\in X$
and $k$ non-negative integers $\mu_1, \dots, \mu_k$, we denote
by $\mathcal{L}_{[D]}(p_1^{\mu_1}, \dots, p_k^{\mu_k})$
the linear system of hypersurfaces of divisor class $[D]$
passing through $p_i$ with multiplicity at least $\mu_i$
for each $i\in \{1, \dots, k\}$.
The {\em virtual dimension} of $\mathcal{L}=\mathcal{L}_{[D]}(p_1^{\mu_1}, \dots ,p_k^{\mu_k})$ is
\[
\vdim(\mathcal{L}) =
\dim\left( [D] \right) -
\sum_{i=1}^k \binom{n+\mu_i-1}{n} -1.
\]
The {\em expected dimension} of $\mathcal{L}$
is $\edim(\mathcal{L})=\max(\vdim(\mathcal{L}),-1)$.
We say that $\mathcal{L}$ is {\em special} 
if the inequality $\dim(\mathcal{L})>\edim(\mathcal{L})$ holds,
otherwise we say that the system is {\em non-special}.

In the following, we describe the main results of this paper.
First, we study the existence of automorphisms of toric varieties
mapping points of the dense torus into invariant points.
In this direction, we prove the following theorem.

\begin{introthm}\label{introthm1}
The complete toric variety $X(\Sigma)$ admits an automorphism
which maps an invariant point into the dense torus if and only if
there exists a smooth full-dimensional cone $\sigma \in \Sigma$ such that
\[
\Sigma(1)\setminus \sigma(1) \subset -\sigma.
\]
In this case, $\sigma$ defines such invariant point.
\end{introthm}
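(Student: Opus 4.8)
The plan is to prove both implications by transferring the question to the Cox homogeneous coordinate ring of $X(\Sigma)$. I will use the following standard facts: writing $\pi\colon\mathbb{A}^{\Sigma(1)}\setminus Z\to X(\Sigma)$ for the Cox quotient, with $Z$ the irrelevant locus, one has $\Cl(X(\Sigma))=\zz^{\Sigma(1)}/M$ via the map $m\mapsto(\langle m,u_\rho\rangle)_{\rho\in\Sigma(1)}$ (injective since $\Sigma$ is complete), the $\rho$-th Cox variable $x_\rho$ has degree $[D_\rho]$, one has $\pi^{-1}(T)=(\cc^*)^{\Sigma(1)}$, and an invariant point of $X(\Sigma)$ is the distinguished point $x_\omega$ of a full-dimensional cone $\omega$, lifting to a point $P\in\mathbb{A}^{\Sigma(1)}\setminus Z$ with $P_\rho=0$ exactly for $\rho\in\omega(1)$. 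Since $X(\Sigma)$ is complete it has no torus factors, so every element of $\Aut(X(\Sigma))$ lifts to a $\Cl(X(\Sigma))$-graded polynomial automorphism $\widetilde\phi$ of $\mathbb{A}^{\Sigma(1)}$ whose $\rho$-th coordinate $\widetilde\phi_\rho$ is homogeneous of degree $[D_\rho]$; and the unipotent one-parameter subgroups of $\Aut(X(\Sigma))$ are those attached to Demazure roots, a root being an $e\in M$ with $\langle e,u_{\rho_e}\rangle=-1$ for a unique ray $\rho_e$ and $\langle e,u_\rho\rangle\ge 0$ for $\rho\ne\rho_e$, the associated automorphism fixing every Cox variable except $x_{\rho_e}$, which it sends to $x_{\rho_e}+t\prod_{\rho\ne\rho_e}x_\rho^{\langle e,u_\rho\rangle}$.

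For the implication ($\Leftarrow$), assume $\sigma\in\Sigma$ is smooth, full-dimensional and satisfies $\Sigma(1)\setminus\sigma(1)\subset-\sigma$. Write $\sigma(1)=\{\rho_1,\dots,\rho_n\}$, let $u_1,\dots,u_n$ be the primitive generators — a $\zz$-basis of $N$ by smoothness — and $m_1,\dots,m_n\in M$ the dual basis. First I would check that each $-m_i$ is a Demazure root with distinguished ray $\rho_i$: indeed $\langle-m_i,u_i\rangle=-1$ and $\langle-m_i,u_j\rangle=0$ for $j\ne i$, while for $\rho\notin\sigma(1)$ the hypothesis writes $u_\rho=-\sum_j a_j^{(\rho)}u_j$ with $a_j^{(\rho)}\in\zz_{\ge 0}$, so $\langle-m_i,u_\rho\rangle=a_i^{(\rho)}\ge 0$. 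Let $g_i\in\Aut(X(\Sigma))$ be the corresponding unipotent automorphism at parameter $1$, lifted to $\mathbb{A}^{\Sigma(1)}$. Starting from a lift $P$ of $x_\sigma$ (so $P_{\rho_i}=0$ for every $i$ and $P_\rho=c_\rho\ne 0$ for $\rho\notin\sigma(1)$) I would apply $g_n\circ\cdots\circ g_1$: each $g_i$ changes only the $\rho_i$-coordinate, replacing the $0$ there by $\prod_{\rho\notin\sigma(1)}c_\rho^{a_i^{(\rho)}}\ne 0$, and leaves the coordinates indexed by $\Sigma(1)\setminus\sigma(1)$ unchanged, so the final point lies in $(\cc^*)^{\Sigma(1)}$. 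Hence $\phi:=g_n\circ\cdots\circ g_1$ maps $x_\sigma$ into $T$.

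For ($\Rightarrow$), assume $\phi\in\Aut(X(\Sigma))$ satisfies $\phi(x_\tau)=t_0\in T$ for some invariant point $x_\tau$; then $\tau$ is full-dimensional and I claim one may take $\sigma=\tau$. Smoothness of $\tau$ is immediate: $\phi$ is an isomorphism carrying $x_\tau$ to the smooth point $t_0\in T$, so $x_\tau$ is a smooth point of $X(\Sigma)$, hence $\tau$ is a smooth cone; write $\tau(1)=\{\rho_1,\dots,\rho_n\}$, with primitive generators $u_1,\dots,u_n$ a $\zz$-basis of $N$ and dual basis $m_1,\dots,m_n$. Lift $\phi$ to a graded automorphism $\widetilde\phi$ and $x_\tau$ to $P$. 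Then $\widetilde\phi(P)\in\pi^{-1}(t_0)\subset(\cc^*)^{\Sigma(1)}$, so $\widetilde\phi_{\rho_i}(P)\ne 0$ for each $i$. Because $P_\rho=0$ for every $\rho\in\tau(1)$, any monomial of $\widetilde\phi_{\rho_i}$ divisible by some $x_\rho$ with $\rho\in\tau(1)$ vanishes at $P$; hence $\widetilde\phi_{\rho_i}$ contains a monomial $x^\alpha$ with $\supp\alpha\subset\Sigma(1)\setminus\tau(1)$ of degree $[D_{\rho_i}]$. As $\alpha_{\rho_j}=0$ for all $j$, unwinding $\Cl(X(\Sigma))=\zz^{\Sigma(1)}/M$ shows this degree condition means there exists $m\in M$ with $\langle m,u_{\rho_i}\rangle=-1$, $\langle m,u_{\rho_j}\rangle=0$ for $j\ne i$, and $\langle m,u_\rho\rangle=\alpha_\rho\ge 0$ for all $\rho\in\Sigma(1)\setminus\tau(1)$. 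Since $u_1,\dots,u_n$ is a $\zz$-basis, the first $n$ equalities force $m=-m_i$, and then the remaining ones read $\langle m_i,u_\rho\rangle\le 0$ for every $\rho\in\Sigma(1)\setminus\tau(1)$. Finally, for such $\rho$ write $u_\rho=\sum_j b_j u_j$ with $b_j\in\zz$; then $b_j=\langle m_j,u_\rho\rangle\le 0$, so $u_\rho\in-\operatorname{cone}(u_1,\dots,u_n)=-\tau$. Therefore $\Sigma(1)\setminus\tau(1)\subset-\tau$, and the invariant point sent into $T$ is the one defined by $\tau$, as claimed.

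The step I expect to require the most care is, in ($\Rightarrow$), deducing from the mere non-vanishing $\widetilde\phi_{\rho_i}(P)\ne 0$ the existence of an actual monomial of $\widetilde\phi_{\rho_i}$ supported on $\Sigma(1)\setminus\tau(1)$: here one should note that $\pi^{-1}(x_\tau)$ is a single orbit of the torus $\Hom(\Cl(X(\Sigma)),\cc^*)$ and that the condition that all coordinates of a point be nonzero is stable under that torus, so the representative $P$ may be chosen conveniently, after which a polynomial that is nonzero at $P$ and all of whose monomials surviving the substitution are supported on $\Sigma(1)\setminus\tau(1)$ must indeed possess such a monomial with nonzero coefficient. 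The other delicate point — the one where the geometric hypothesis becomes the combinatorics of the statement — is the sign bookkeeping turning ``$x^\alpha$ has $\Cl(X(\Sigma))$-degree $[D_{\rho_i}]$'' into ``$\langle m_i,u_\rho\rangle\le 0$ for $\rho\notin\tau(1)$''; the rest is the standard correspondence between $\Aut(X(\Sigma))$ and the graded automorphisms of its Cox ring, available because $X(\Sigma)$, being complete, has no torus factors.
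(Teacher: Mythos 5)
Your ``if'' direction is essentially the paper's own argument: both proofs move the distinguished point of $\sigma$ to (or from) the torus by composing the $n$ Demazure-root automorphisms attached to the roots dual to the rays of $\sigma$, the hypothesis $\Sigma(1)\setminus\sigma(1)\subset-\sigma$ being exactly what makes these roots admissible. For the ``only if'' direction you take a genuinely different route. The paper invokes Bazhov's orbit criterion \cite{Baz13} and compares the monoids $\Gamma(0)$ and $\Gamma(\sigma)$ inside $\Cl(X(\Sigma))$, deriving a contradiction when some entry of $P_0$ is negative; you instead lift the automorphism to $\Spec(\Cox(X(\Sigma)))\simeq\kk^r$ and extract, from the nonvanishing of each coordinate $\widetilde{\phi}_{\rho_i}$ at a lift of the invariant point, a monomial supported off $\tau(1)$ whose degree relation forces $-m_i$ to be a Demazure root, whence $\Sigma(1)\setminus\tau(1)\subset-\tau$. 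Your version is more self-contained (no appeal to Bazhov) and it proves the smoothness of $\tau$ explicitly, a point the paper's proof only assumes implicitly when it normalizes the transitive cone to $\langle -e_1,\dots,-e_n\rangle$; the paper's route, in exchange, gets the combinatorics for free from a quoted theorem.

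One step of your forward direction needs repair: it is not true that every $\phi\in\Aut(X(\Sigma))$ lifts to an automorphism of $\kk^r$ with $\widetilde{\phi}_\rho$ homogeneous of degree $[D_\rho]$. The lift merely normalizes the quasitorus $H$, so $\widetilde{\phi}_\rho$ is homogeneous of degree $\gamma([D_\rho])$ for some automorphism $\gamma$ of $\Cl(X(\Sigma))$; for the factor exchange on $\pp^1\times\pp^1$ one has $\gamma\neq\mathrm{id}$ and no degree-preserving lift exists, and with a twisted degree your derivation of the root $-m_i$ breaks down as written. The fix is routine via Theorem~\ref{coxaut}: since the subgroup generated by $T$ and $\mathcal{R}(\Sigma)$ is the (normal) identity component, write $\phi=\psi\circ\phi_0$ with $\psi$ induced by $\Aut(N,\Sigma)$ and $\phi_0$ a word in $T$ and $\mathcal{R}(\Sigma)$; as $\psi^{-1}$ preserves the torus, $\phi_0$ still maps $x_\tau$ into $T$, and $\phi_0$ does admit a degree-preserving lift, so you may run your argument with $\phi_0$ in place of $\phi$. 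With this adjustment your proof is correct and the conclusion that $\tau$ itself is the transitive cone is preserved.
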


We will call such toric varieties {\em quasi-transitive}.
Our next main result concerns linear systems on quasi-transitive toric varieties.
Given a linear system $\mathcal{L}$ on a quasi-transitive toric variety $X(\Sigma)$,
we introduce the {\em toric expected dimension} (see Definition~\ref{deftor}),
denoted by $\ttedim(\mathcal{L})$,
which is the expected dimension of $\mathcal{L}$ considering 
the contribution given by the toric subvarieties passing through the invariant point
whose orbit under $\Aut(X(\Sigma))$ contains $T$.
The second main result of this paper is the following
(see Theorem~\ref{coxlinear} for the complete statement).

\begin{introthm}
Let $p_1,\dots, p_k$ be points in very general position
on a quasi-transitive toric variety $X(\Sigma)$.
Then we have that
\[
\dim(\mathcal{L}) \geq \ttedim(\mathcal{L})\geq \edim(\mathcal{L})
\] 
for $\mathcal{L}=\mathcal{L}_{[D]}(p_1^{\mu_1}, \dots, p_k^{\mu_k})$.
\end{introthm}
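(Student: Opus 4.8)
The inequality $\ttedim(\mathcal{L})\ge\edim(\mathcal{L})$ will be immediate from Definition~\ref{deftor}: the toric expected dimension is formed as a maximum over the toric subvarieties through the distinguished invariant point, recording the forced contribution of each to the base locus, and the term in which no subvariety is removed returns exactly $\vdim(\mathcal{L})$; since the quantity is in any case at least $-1$, it dominates $\edim(\mathcal{L})=\max(\vdim(\mathcal{L}),-1)$. So the substance is the first inequality.

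To prove $\dim(\mathcal{L})\ge\ttedim(\mathcal{L})$, the plan is first to move one base point onto the invariant point. By Theorem~\ref{introthm1}, quasi-transitivity of $X(\Sigma)$ provides a smooth full-dimensional cone $\sigma\in\Sigma$ with $\Sigma(1)\setminus\sigma(1)\subset-\sigma$ and a torus-fixed point $p_\sigma$ whose $\Aut(X(\Sigma))$-orbit contains $T$. Hence, for very general $p_1,\dots,p_k$ there is $\varphi\in\Aut^0(X(\Sigma))$ with $\varphi(p_\sigma)=p_k$; since $\Aut^0(X(\Sigma))$ acts trivially on $\Cl(X(\Sigma))$, pullback along $\varphi$ identifies $\mathcal{L}$ with $\mathcal{L}_{[D]}(\varphi^{-1}(p_1)^{\mu_1},\dots,\varphi^{-1}(p_{k-1})^{\mu_{k-1}},p_\sigma^{\mu_k})$, whose remaining base points $\varphi^{-1}(p_i)$ are again very general. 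We may therefore assume $p_k=p_\sigma$, which is exactly the configuration in which $\ttedim$ is meant to be evaluated.

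Next I would pass to the Cox ring $S=\Cox(X(\Sigma))=\kk[x_\rho:\rho\in\Sigma(1)]$, identify $H^0(X(\Sigma),\mathcal{O}(D))$ with the graded piece $S_{[D]}$ spanned by monomials, and, using that $\sigma$ is smooth, trivialize $\mathcal{O}(D)$ on $U_\sigma\cong\mathbb{A}^n$ with coordinates the $x_\rho$ for $\rho\in\sigma(1)$, so that $p_\sigma$ becomes the origin. The hypothesis $\Sigma(1)\setminus\sigma(1)\subset-\sigma$ is precisely what controls a monomial of $S_{[D]}$ through its exponents on $\sigma(1)$, and it turns the condition of vanishing to order $\ge\mu_k$ at $p_\sigma$ into the requirement that every surviving monomial be divisible by a prescribed product $\prod_{\rho\in\sigma(1)}x_\rho^{t_\rho}$. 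This exhibits the orbit closures $V(\tau)$, $\tau\le\sigma$ --- which are exactly the toric subvarieties through $p_\sigma$ entering $\ttedim(\mathcal{L})$ --- as genuine components of $\Bs(\mathcal{L})$, and factoring out the forced product yields a splitting $\mathcal{L}=\sum_{\rho\in\sigma(1)}t_\rho D_\rho+\mathcal{L}'$ with residual system $\mathcal{L}'=\mathcal{L}_{[D']}(p_1^{\mu_1},\dots,p_{k-1}^{\mu_{k-1}},p_\sigma^{\mu_k-\sum_\rho t_\rho})$; the vanishing orders at the $p_i$ are unchanged because $p_i\notin D_\rho$ for $\rho\in\sigma(1)$, while each $x_\rho$ is a local coordinate at $p_\sigma$.

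From here I would induct on $\mu_k$. When some $t_\rho$ is positive the splitting gives $\dim(\mathcal{L})=\dim(\mathcal{L}')$ with $\mathcal{L}'$ of strictly smaller multiplicity at $p_\sigma$; by induction $\dim(\mathcal{L}')\ge\ttedim(\mathcal{L}')$, and one checks that Definition~\ref{deftor} is compatible with stripping off a forced toric base divisor, so that $\ttedim(\mathcal{L})=\ttedim(\mathcal{L}')$. The base case is when no $V(\tau)$ is forced into $\Bs(\mathcal{L})$, so that $\ttedim(\mathcal{L})=\edim(\mathcal{L})$, and the remaining task is to show that the very general points $p_1,\dots,p_{k-1}$ together with $p_\sigma$ (which now forces no toric divisor) impose the expected number of conditions on $S_{[D]}$; here very general position is what rules out any further drop. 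I expect the hard part to be exactly this bookkeeping: matching the recursive stripping of toric base components with the combinatorial recipe defining $\ttedim$, checking that the passage to $\mathcal{L}'$ neither over- nor under-counts the conditions at the $p_i$, and controlling the residual class $[D']$ so that no new non-toric special behaviour appears along the way.
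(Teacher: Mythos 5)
There is a genuine gap in your plan for the main inequality $\dim(\mathcal{L})\ge\ttedim(\mathcal{L})$. Definition~\ref{deftor} is not a maximum over toric subvarieties through one distinguished point: it subtracts, \emph{for every} point $p_i$, only the derivative indices $u\in\Delta(\mu_i)$ that lie in $P(D)$, i.e.\ the discount $\#\left(\Delta(\mu_i)\setminus P(D)\right)$ is applied at each of the $k$ points, and it is typically produced by forced toric strata of codimension $\ge 2$, not by divisors. Your inductive engine only sees the one point moved to $p_\sigma$, and only divisorial forced components there. Concretely, take $X=(\pp^1)^3$, $D$ of tridegree $(2,2,2)$ (so $P(D)=[0,2]^3$), one point of multiplicity $\mu=4$: the monomials surviving multiplicity $4$ at the invariant point include $x_1^2x_2^2$, $x_1^2x_3^2$, $x_2^2x_3^2$, which share no variable, so all your $t_\rho$ vanish and the induction lands immediately in the base case; yet the discount is positive (the indices $(3,0,0),(0,3,0),(0,0,3)$ lie in $\Delta(\mu)$ but outside $P(D)$), so $\ttedim(\mathcal{L})>\edim(\mathcal{L})$. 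In the base case your argument yields at most the trivial bound $\dim(\mathcal{L})\ge\edim(\mathcal{L})$, which is strictly weaker than what is needed, and the stronger claim you invoke there --- that very general points impose the expected number of conditions --- is precisely the open interpolation problem and is false in general (the paper's own Hirzebruch example is special). Moreover, even in the divisorial steps you never address the discounts at $p_1,\dots,p_{k-1}$, and the asserted compatibility $\ttedim(\mathcal{L})=\ttedim(\mathcal{L}')$ after stripping the fixed part is left unverified (the residual polytope changes). So the induction cannot close as described.

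For contrast, the paper's proof is global and needs no specialization to the invariant point and no induction: writing $D$ in standard form, sections are spanned by the monomials indexed by $P(D)\cap M$; after showing the saturation $\left(I(p^{-1}(p_i))^{\mu}\colon J(\Sigma)^\infty\right)=I(p^{-1}(p_i))^{\mu}$, the multiplicity conditions at each very general point become vanishing of the derivatives $d/dx^u$, $u\in\Delta(\mu_i)$, of the dehomogenized monomials $y^m$, assembled into the matrix $M(D,\mu_1,\dots,\mu_k)$ (plus a rank comparison with the matrix before evaluating $x_{n+1}=\dots=x_r=1$). Quasi-transitivity forces all rays outside $\sigma$ to have nonnegative coordinates, so $P(D)$ is downward closed in the chart coordinates; hence every row indexed by $u\notin P(D)$ is identically zero \emph{at every point}, and counting only the possibly nonzero rows gives $\dim(\mathcal{L})\ge\ttvdim(\mathcal{L})$ directly, for all $k$ points simultaneously. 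Your second inequality $\ttedim(\mathcal{L})\ge\edim(\mathcal{L})$ is indeed immediate, but simply because $\Delta(D,\mu_i)\subseteq\Delta(\mu_i)$, not via the maximum-over-subvarieties reading you propose.
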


We call the linear system {\em toric special} if the first inequality is strict,
and {\em toric non-special} otherwise.
Given a linear system $\mathcal{L}$ on a quasi-transitive toric variety $X(\Sigma)$
and certain toric degeneration of $X(\Sigma)$,
in Theorem~\ref{degeneration}
we relate the toric non-speciality 
of the linear system $\mathcal{L}$ 
with the toric non-speciality 
of the degenerate linear systems.
Finally, we give examples of linear systems
that are special and toric non-special
and linear systems that are toric special.

The paper is organized as follows: In Section~\ref{bs} we recall some definitions and
notation from toric geometry and Cox rings. 
In Section~\ref{sec2} we give a characterization of complete
toric varieties admitting an automorphism which maps an invariant  point
into the torus. Finally, in Section~\ref{sec3} we introduce the concept
of toric non-special linear systems and study toric degenerations
of such linear systems.

\subsection*{Acknowledgements}
The author would like to thank Antonio Laface for many useful conversations.

\section{Basic Setup}\label{bs}

In this section, we shortly recall the standard notation 
of toric varieties. 
The notation of toric geometry follows ~\cite{CLS}
while the notation of Cox rings is the one from ~\cite{ADHL}.

We work over an algebraically closed field $\kk$
of characteristic zero.
Let $N$ be a finitely generated free abelian group and let 
$M=\Hom(N,\zz)$ be the dual of $N$.
We denote by $N_\qq =N\otimes_\zz \qq$ and
$M_\qq = M\otimes_\zz \qq$ the associated rational vector spaces.
Given a pointed convex polyhedral cone $\sigma \subset N_\qq$,
we denote by $\sigma^\vee \subset M_\qq$ its dual cone,
by $\kk[\sigma^\vee\cap M]$ the monoid ring of $(\sigma^\vee, +)$
and by $X(\sigma)=\Spec(\kk[\sigma^\vee \cap M])$
the associated affine toric variety.

We define a {\em fan} $\Sigma$ to be a finite set of pointed
convex polyhedral cones of $N_\qq$ such that the face of any cone
in $\Sigma$ is again in $\Sigma$, and the intersection of two cones in $\Sigma$
is a face of both.
Given a fan $\Sigma$ we can define a toric variety, denoted by $X(\Sigma)$, by gluing 
the affine toric varieties $X(\sigma)$ and $X(\sigma')$ along
$X(\sigma\cap \sigma')$ whenever $\sigma$ and $\sigma'$ are cones of $\Sigma$.
Any $\mu$-dimensional face of $\sigma$ defines a $(n-\mu)$-dimensional orbit 
of $X(\sigma)$. The above gluing is along $T$-invariant 
subvarieties, then $X(\Sigma)$ is also endowed with a $T$-action.
We denote by $\Sigma(\mu)$ the set of $\mu$-dimensional faces of $\Sigma$.

Given an algebraic variety $X$, we denote by
$\WDiv(X)$ the group of Weil divisors of $X$
and 
$\PDiv(X)$ the group of principal divisors of $X$.
For a toric variety $X(\Sigma)$, we denote by
$\WDiv_T(X(\Sigma))$ and $\PDiv_T(X(\Sigma))$ the group of
$T$-invariant Weil divisors and $T$-invariant principal divisors,
respectively.
There is a bijection between the 
irreducible $T$-invariant divisors of $X(\Sigma)$
and the one-dimensional faces of $\Sigma$. 
We denote by $\rho_1,\dots, \rho_r$
the primitive lattice generators of the one-dimensional faces
of $\Sigma$, and by $D_i$ the irreducible $T$-invariant divisor associated
to $\rho_i$ for each $i$.

Given an algebraic variety $X$ which is irreducible,
normal, with only constant invertible functions 
and finitely generated divisor class group,
we define its Cox rings to be
\[
\mathcal{R}(X)=\bigoplus_{[D]\in \Cl(X)}H^0\left( X, \mathcal{O}_X(D)\right).
\]
In Section~\ref{sec2} we give an explicit construction
of the Cox ring of certain toric varieties,
we refer the reader to ~\cite[Chapter 2]{ADHL}
for the general construction.

Finally, we recall the construction of projective toric
varieties from polytopes.
Given a full-dimensional convex compact polytope $P\subset M_\qq$,
we denote by $\Sigma_P$ its dual fan
and $X(P)$ the toric variety associated to the dual fan.
Observe that we can write
\[
P=\{ m\in M_\qq \mid 
\langle m, \rho_i \rangle \leq  -d_i, 
\text{ for $i\in \{1,\dots, r\}$}\}
\]
for certain integers $d_1, \dots, d_r$.
The divisor $D_P=\sum_{i=1}^r d_i D_i$
defines an ample divisor on $X(P)$,
and we say that $D_P$ is the divisor of $X(P)$
associated to the polytope $P$.
Observe that different polytopes can define exactly
the same toric variety if they have the same dual fan, 
but the associated divisors will define different embeddings
into projective spaces.
We say that a polytope is smooth if it defines a smooth toric variety.

\section{Automorphism Group of Toric Varieties}\label{sec2}

In this section, we study toric varieties $X(\Sigma)$
containing an invariant point,
such that there exists an automorphism 
mapping such point into the torus. 
We describe the Cox ring of such toric varieties
in order to study their automorphisms groups.

\begin{definition}
We say that an invariant point of $X(\Sigma)$
is {\em transitive on the torus} if its orbit with respect
to $\Aut(X(\Sigma))$ contains a point of the torus, or equivalently,
its orbit with respect to $\Aut(X(\Sigma))$ contains the torus. 
We say that a cone $\sigma \in \Sigma$ is
{\em transitive on the torus} if $\sigma$ is full-dimensional
and the corresponding invariant point is transitive on the torus.
Given a complete toric variety $X(\Sigma)$
which contains an invariant point which is transitive on the torus
we say that it is a {\em quasi-transitive toric variety}.
\end{definition}

In what follows we will fix a basis 
$\{e_1, \dots, e_n\}$ of $N$,
given an element $\rho \in N_\qq$
we denote by $\rho_i$ its $i$-th coordinate
on this basis.
If $X(\Sigma)$ is a quasi-transitive toric variety
we can assume without loss of generality
that $\sigma=\langle -e_1,\dots, -e_n\rangle$
is in $\Sigma$ and is transitive on the torus.
First, we will describe these toric varieties 
as a quotient of an open subvariety of an affine space
following ~\cite[Section 2.1.1]{ADHL}.

Let $\rho_1, \dots, \rho_r$ denote the primitive generators
of the one-dimensional faces of $\Sigma$, 
with $r\geq n+1$ and $\rho_i=-e_i$ for $1\leq i \leq n$.
Let $F=\zz^r$ and consider the linear map
$P\colon F \rightarrow N$ sending the $i$-th
canonical base vector $f_i\in F$ to $\rho_i \in N$.
Denote by $\delta \subset F_\qq$ the positive orthant
and define a fan $\widehat{\Sigma}$ in $F$ consisting
on all the faces of $\delta$ whose image on $N$
are contained in some cone of $\Sigma$.
Then $P$ induces a toric morphism 

and we have exact sequences 
\begin{equation}\label{exact}
 \xymatrix{
0\ar[r] & M\ar[r]^-{P^*} & E\ar[r]^-{Q} & K\ar[r] & 0 \\
0\ar[r] & K^*\ar[r]^-{Q^*} & F\ar[r]^-{P} & N\ar[r] & 0,\\ 
 }
\end{equation}
where $E$ is the dual of $F$, 
$P^*\colon M\rightarrow E$ the dual map of $P$
and $Q\colon E\rightarrow K=E/P^*(M)$ the induced projection.
Since $\rho_i = -e_i$ for $1\leq i \leq n$, we can write
\[
P= \begin{bmatrix} -\Id_n & P_0 \end{bmatrix}
\qquad
Q=\begin{bmatrix} P_0^t & \Id_{r-n} \end{bmatrix},
\]
where $P_0$ is the $n\times (r-n)$ matrix
whose columns are the vectos $\rho_{n+1}, \dots, \rho_r$.
Observe that $Q$ induces a $K$-grading on $\kk[E\cap \delta^\vee]$,
then by ~\cite[Theorem 2.1.32]{ADHL} we conclude that
\[ \Cox(X(\Sigma)) \simeq  \kk[E\cap \delta^\vee]\] 
as $K$-graded polynomial rings.

Recall that the group $\WDiv^T(X(\Sigma))$
of $T$-invariant Weil divisors of $X(\Sigma)$
is generated by the $T$-invariant divisors $D_i$
corresponding to the rays $\rho_i$ of $\Sigma$.
Then, we have an isomorphism $E\simeq \WDiv^T(X(\Sigma))$
given by 
\[ e \mapsto \langle e,f_1\rangle D_1 +\dots + \langle e , f_r \rangle D_r.\]
Moreover, the injective morphism $P^*$
identifies $M$ with $\PDiv^T(X(\Sigma))$.
Thus, we conclude that $K\simeq \Cl(X(\Sigma))$
and we can write
\begin{equation}\label{cox}
\Cox(X(\Sigma))\simeq k[x_1, \dots, x_r], 
\end{equation}
\begin{equation*}
\deg(x_i) =[D_i] \in \Cl(X(\Sigma)),
\quad
\Spec(\Cox(X(\Sigma)))\simeq \kk^r.
\end{equation*}
Denoting by $H=\Spec(\kk[K])$ the torus acting on 
$\Cox(X(\Sigma))$ we have that the subvariety $X(\widehat{\Sigma})\subset \Spec(\Cox(X(\Sigma)))$
is $H$-invariant and the morphism $p$ is a good quotient for the induced
action on $X(\widehat{\Sigma})$.
In the coordinates~\ref{cox} the complement $V(\Sigma)$
of $X(\widehat{\Sigma})$ in $\kk^r$ 
has defining ideal 
\[
\Irr(\Sigma)=\left\langle \prod_{i\in I} x_i \mid
I\subset \{ 1, \dots, r\} \text{ and $\{ \rho_i \mid i\in I\}$
are not the rays of a cone of $\Sigma$} \right\rangle,
\]
called the {\em irrelevant ideal} of $\Cox(X(\Sigma))$.

Now we turn to describe the automorphisms of $X(\Sigma)$,
which are induced by $H$-equivariant automorphisms
of $X(\widehat{\Sigma})$.
First, observe that any element $t\in T$ defines
an automorphism of $X$
so we can identify $T\subset \Aut(X(\Sigma))$.
We denote by $\Aut(N,\Sigma)$ the subgroup
of automorphism of $N$ preserving the fan $\Sigma$, any such automorphism
induces an automorphism of $X(\Sigma)$. Finally, we say that
$m\in M$ is a {\em Demazure root} of $\Sigma$
if the following condition holds:
\[
\text{ There exists } i \in \{1, \dots, r\} \text{ such that }
\langle m, \rho_i \rangle = -1
\text{ and } 
\langle m, \rho_j \rangle \geq 0, 
\text{ for all $j\neq i$}.
\]
We also say that such $m$ is a {\em Demazure root of $\rho_i$}.
Observe that given a Demazure root $m$ of $\rho_i$
and $t\in \kk^*$
we have a $K$-graded automorphism of $\Cox(X(\Sigma))$ 
defined by 
\begin{equation}\label{Demroot}
y_{(m,t)}(x_i)=x_i + t \prod_{j\neq i} x_j^{\langle m,\rho_j\rangle}
\text{ and }
y_{(m,t)}(x_j)=x_j \text{ for all $j\neq i$}.
\end{equation}
This automorphism induces an automorphism of $X(\Sigma)$.
We denote by $\mathcal{R}(\Sigma)$ the set of
automorphisms induced by Demazure roots on $X(\Sigma)$.
By abuse of notation we also denote by $\mathcal{R}(\Sigma)$
the set of $K$-graded automorphisms of $\Cox(X(\Sigma))$ defined by~\ref{Demroot}.
With the above notation, we state the following theorem
proved in ~\cite[Corollary 4.7]{Cox92}.

\begin{theorem}\label{coxaut}
Let $X(\Sigma)$ be a complete simplicial toric variety. 
Then $\Aut(X(\Sigma))$ is generated
by $T,\mathcal{R}(\Sigma)$ and $\Aut(N,\Sigma)$.
\end{theorem}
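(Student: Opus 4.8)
The plan is to reduce the computation of $\Aut(X(\Sigma))$ to that of the group of graded automorphisms of the Cox ring, exploiting the quotient presentation $p\colon X(\widehat\Sigma)\to X(\Sigma)$ recalled above. Since $\Sigma$ is complete its rays span $N_\qq$, so $P\colon F\to N$ is surjective, $X(\Sigma)$ has no torus factor, and $p$ presents $X(\Sigma)$ as the good quotient of the $H$-invariant open set $X(\widehat\Sigma)=\kk^r\setminus V(\Sigma)$ by $H=\Spec(\kk[\Cl(X(\Sigma))])$; moreover the irrelevant locus $V(\Sigma)$ has codimension at least two in $\kk^r$.

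The first and main step is to show that every $\varphi\in\Aut(X(\Sigma))$ lifts to an automorphism $\widehat\varphi$ of $X(\widehat\Sigma)$ normalising $H$. This is the technical heart of the theorem and is where one uses that $p$ is the characteristic space of $X(\Sigma)$, that is, an almost universal torsor for the action of $H$: pulling back the $\Cl(X(\Sigma))$-grading along $\varphi^*\in\Aut(\Cl(X(\Sigma)))$ and the sheaves $\mathcal O_{X(\Sigma)}(D_i)$ along $\varphi$, one obtains from the identification $\Cox(X(\Sigma))=\bigoplus_{[D]}H^0(X(\Sigma),\mathcal O_{X(\Sigma)}(D))$ a ring automorphism of $\kk[x_1,\dots,x_r]$ that is graded up to $\varphi^*$ and preserves $\Irr(\Sigma)$. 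Since $\kk^r$ is normal and $V(\Sigma)$ has codimension $\geq 2$, this automorphism of $X(\widehat\Sigma)$ extends to all of $\kk^r=\Spec(\Cox(X(\Sigma)))$; conversely any such automorphism descends to $X(\Sigma)$, and $H$ acts trivially there. Hence $\Aut(X(\Sigma))\cong\overline G/H$, where $\overline G$ is the group of automorphisms of $\kk[x_1,\dots,x_r]$ that are graded up to an automorphism of $\Cl(X(\Sigma))$ and preserve $\Irr(\Sigma)$.

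It then remains to describe $\overline G$ explicitly. A graded automorphism $\alpha$ is determined by the homogeneous polynomials $\alpha(x_i)$ of degree $[D_i]$, so I would first list the monomials of degree $[D_i]$: a monomial $\prod_j x_j^{a_j}$ has degree $[D_i]$ exactly when $a_j=\langle m,\rho_j\rangle$ for $j\neq i$ and $a_i=1+\langle m,\rho_i\rangle$ for some $m\in M$, and positivity of the $a_j$ together with the completeness of $\Sigma$ (the $\rho_j$ positively span $N_\qq$) forces either $m=0$, giving the monomial $x_i$, or $m$ a Demazure root of $\rho_i$, giving the monomial $\prod_{j\neq i}x_j^{\langle m,\rho_j\rangle}$ appearing in~\ref{Demroot}. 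A filtration argument then shows that the identity component $\overline G^{\,0}$ is generated by the maximal torus $(\kk^*)^r$ scaling the coordinates, together with the root subgroups $\{y_{(m,t)}\mid t\in\kk\}$ over Demazure roots $m$; and the finite quotient $\overline G/\overline G^{\,0}$ is realised by permutations of the coordinates that respect $\Irr(\Sigma)$ and some automorphism of $\Cl(X(\Sigma))$, which are precisely the automorphisms induced by $\Aut(N,\Sigma)$. Passing to the quotient by $H$, the torus $(\kk^*)^r$ maps onto $T$, the root subgroups map onto $\mathcal R(\Sigma)$, and the permutation part maps onto $\Aut(N,\Sigma)$, so $\Aut(X(\Sigma))$ is generated by $T$, $\mathcal R(\Sigma)$ and $\Aut(N,\Sigma)$.

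The hard part is genuinely the lifting step of the second paragraph: without it one only obtains the inclusion of the subgroup generated by $T$, $\mathcal R(\Sigma)$ and $\Aut(N,\Sigma)$ into $\Aut(X(\Sigma))$, and the reverse inclusion requires the almost-universal-torsor property of $p$, which must be handled with care precisely because $\Cl(X(\Sigma))$ may have torsion (the fan being simplicial but not necessarily smooth). Everything after that is a finite combinatorial bookkeeping with the exact sequences~\ref{exact} and the monomials of each degree $[D_i]$.
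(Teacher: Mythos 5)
This theorem is not proved in the paper at all: it is quoted from \cite[Corollary 4.7]{Cox92}, and your outline is exactly the strategy of that reference (and of \cite{ADHL}): lift an automorphism of $X(\Sigma)$ to the characteristic space $X(\widehat\Sigma)\subset\kk^r$ using that the irrelevant locus has codimension at least two, identify the group of automorphisms of $\Cox(X(\Sigma))$ that are graded up to an automorphism of $\Cl(X(\Sigma))$ and preserve $\Irr(\Sigma)$ by listing the degree-$[D_i]$ monomials via Demazure roots, and pass to the quotient by $H$. So your proposal is correct in outline and matches the intended source; you also rightly flag the lifting step and the possible torsion in $\Cl(X(\Sigma))$ as the delicate points, which is precisely what Cox's erratum was written to repair, so the paper's choice to cite rather than reprove is reasonable.
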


Given a cone $\sigma \in \Sigma$
we denote by $\mathcal{O}_\sigma$ the corresponding
$\mathbb{T}$-invariant orbit of $X(\Sigma)$.
We denote by $\Gamma(\sigma)$
the monoid generated by the classes of $\mathbb{T}$-invariant
divisors of $X(\Sigma)$ that do not contain $\mathcal{O}_\sigma$.
We write 
\[
\Upsilon(\Sigma)=\{ \Gamma(\sigma) \mid \sigma \in \Sigma\}.
\]

From ~\cite[Theorem 3.7]{Baz13} we know that 
$\mathcal{O}_\sigma$ and $\mathcal{O}_{\sigma'}$ on $X(\Sigma)$
are contained in the same $\Aut(X(\Sigma))$-orbit if and only if there exists an
automorphism $\phi \colon \Cl(X(\Sigma))\rightarrow \Cl(X(\Sigma))$
with the following properties:
\begin{itemize}
\item $\phi(\Gamma(\mathcal{O}_\sigma))=\Gamma(\mathcal{O}_{\sigma'})$.
\item $\phi(\Upsilon(\Sigma))=\Upsilon(\Sigma)$.
\item There exists a permutation $f$ of elements in $\{1,\dots, r\}$ such that
$\phi([D_i])=[D_{f(i)}]$.
\end{itemize}

\begin{proof}[Proof of Theorem~\ref{introthm1}]
Given a quasi-transitive toric variety $X(\Sigma)$,
without loss of
generality we can assume that $\sigma=\langle -e_1, \dots, -e_n\rangle\in\Sigma$
is transitive in the torus.
With this assumption, it is enough to prove
that all the entries of $P_0$ are non-negative
with respect to the basis $\{e_1,\dots, e_n\}$.

First, we claim that if $P_0$
has a negative entry then 
$\mathcal{O}_\sigma$ and $\mathcal{O}_{0}$
do not lie in the same $\Aut(X(\Sigma))$-orbit
using ~\cite[Theorem 3.7]{Baz13}.
Recall that under this hypothesis 
$[D_{n+1}],\dots, [D_r]$
is a basis of $\Cl(X(\Sigma))$.
Indeed, $\rank\Cl(X(\Sigma))=r-n$
and $[D_{n+1}],\dots, [D_r]$
generate $\Cl(X(\Sigma))$ since
every $[D_i]$ with $i\in \{1,\dots, n\}$
is in the linear span of $[D_{n+1}],\dots, [D_r]$.
Observe that $\Gamma(0)$ is the monoid
generated by $[D_1],\dots, [D_r]$ in $\Cl(X(\Sigma))$
and $\Gamma(\sigma)$ is the monoid
generated by $[D_{n+1}],\dots, [D_r]$.
Assume that some entry of $P_0$ with respect
to the basis $\{e_1,\dots, e_n\}$ is negative,
then $\Gamma(0)$ strictly contains $\Gamma(\sigma)$.
Indeed, $\Gamma(\sigma)$ is generated
by $[D_{n+1}],\dots, [D_r]$ in $\Cl(X(\Sigma))$
as a monoid,
$\Cl(X(\Sigma))$ is freely generated
by $[D_{n+1}],\dots, [D_r]$ as a group.
Moreover, since $X(\Sigma)$ is complete,
we have an exact sequence 
\[
0\rightarrow M \rightarrow \oplus_{i=1}^n \mathbb{Z}D_i \rightarrow \Cl(X(\Sigma))
\rightarrow 0,
\]
where the first map is $e\mapsto {\rm div}(\chi^e)$.
Taking $e$ to be an element of the canonical basis
$\{e_1,\dots, e_n\}$ 
and using the fact that some entry of $P_0$ is negative
with respect to this basis,
 we can see that 
for some $i \in \{1, \dots, n\}$, the class of $[D_i]$ 
is linearly equivalent to a sum
$a_{n+1}[D_{n+1}]+\dots +a_r[D_r]$
where some $a_i$'s is negative.
Therefore, $\Gamma(0)$ contains an element of
the form
$a_{n+1}[D_{n+1}]+\dots +a_r[D_r]$
where some $a_i$'s is negative.
Thus, if there exists an automorphism
$\phi \colon\Cl(X(\Sigma))\rightarrow \Cl(X(\Sigma))$
with $\phi(\Gamma(0))=\Gamma(\sigma)$
then we have that $\phi(\Upsilon(\Sigma))\neq \Upsilon(\Sigma)$,
since $\phi$ maps the maximal monoid $\Gamma(0)$
to the monoid $\Gamma(\sigma)$,
which is strictly contained in $\Gamma(0)$,
leading to a contradiction.
Thus, if $P_0$ has a negative entry,
there is no automorphism $\phi \colon \Cl(X(\Sigma))
\rightarrow \Cl(X(\sigma))$ with 
$\phi(\Gamma(0))=\Gamma(\sigma)$
and $\phi(\Upsilon(\Sigma))=\Upsilon(\Sigma)$,
proving the claim.

Now, assume that all the entries of the matrix $P_0$
are non-negative.
Then, for every point $p\in (\kk^*)^r$ there exists
$t_1, \dots, t_n \in \kk^*$ such that the first $n$ coordinates of
$y_{(e_1,t_1)}\circ \dots \circ y_{(e_n,t_n)}(p)$ vanish, 
this automorphism of the Cox ring induces an automorphism
of $X(\Sigma)$ which maps a point of the torus 
to the invariant point corresponding to $\sigma$.
\end{proof}

\begin{definition}
Given a full-dimensional polytope $P\subset M_\qq$
and a smooth vertex $p\in P$
we define the {\em convex capsule of $p$ in $P$} as follows:
Pick all the one-dimensional faces of $P$ which contain $p$,
since $p$ is a smooth vertex and $P$ is full-dimensional 
we have exactly $n$ one-dimensional faces containing it.
Call $p_1, \dots, p_n$ the vertices of such one-dimensional faces which
are not $p$ and let $H$ be the hyperplane passing through $p_1, \dots, p_n$.
The convex capsule of $p$ in $P$ is then the convex hull of $p,p_1,\dots, p_n$
and the reflection of $p$ with respect to $H$.
Given a smooth vertex $p$ of a polytope $P$, we say that $p$
is {\em transitive on $P$} if its convex capsule in $P$ contains $P$.
\end{definition}

\begin{corollary}\label{pol}
A vertex $p$ in a full-dimensional smooth polytope $P\subset M_\qq$
defines an invariant point which is transitive on the torus 
if and only if $p$ is transitive on $P$.
\end{corollary}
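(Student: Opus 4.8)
The plan is to translate the polytope condition ``$p$ is transitive on $P$'' into the fan-theoretic condition of Theorem~\ref{introthm1}, namely that there is a smooth full-dimensional cone $\sigma\in\Sigma_P$ with $\Sigma_P(1)\setminus\sigma(1)\subset-\sigma$, applied to the normal fan $\Sigma_P=\Sigma$ and the cone $\sigma$ dual to the vertex $p$. Write $P=\{m\in M_\qq\mid\langle m,\rho_i\rangle\le-d_i\}$ as in the Basic Setup, and let $\sigma\in\Sigma$ be the (full-dimensional, simplicial, and since $p$ is a smooth vertex, smooth) cone corresponding to $p$; after an automorphism of $N$ we may assume $\sigma=\langle-e_1,\dots,-e_n\rangle$, so that $\rho_i=-e_i$ for $1\le i\le n$ and $\sigma(1)=\{\rho_1,\dots,\rho_n\}$. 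By Theorem~\ref{introthm1}, $p$ defines an invariant point transitive on the torus if and only if $\rho_{n+1},\dots,\rho_r\in-\sigma$, i.e. every coordinate $(\rho_j)_i$ with $j>n$, $1\le i\le n$, is $\ge 0$ (equivalently, $P_0$ has non-negative entries). So the task reduces to showing that this sign condition on the rays is equivalent to the convex capsule of $p$ containing $P$.

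The first step is to identify the geometric objects at the vertex $p$. Since $p$ is smooth, the $n$ edges of $P$ emanating from $p$ are dual to the $n$ facets of $P$ through the rays $\rho_1,\dots,\rho_n$; the edge ``opposite'' $\rho_i$ (i.e. contained in all facets but the $i$-th) runs from $p$ in the direction $e_i$ (up to positive scaling), because $e_i$ pairs to $0$ with $\rho_j$ for $j\ne i$ and to $-1$ with $\rho_i$. Thus, with $p$ as origin, the vertices $p_1,\dots,p_n$ of those edges are positive multiples $\lambda_i e_i$ of the basis vectors, the hyperplane $H$ through $p_1,\dots,p_n$ is $\{m\mid\sum_i m_i/\lambda_i=1\}$, the reflection of $p$ across $H$ is $q=2(\sum_i\lambda_i^{-1})^{-1}(\lambda_1^{-1},\dots,\lambda_n^{-1})$... —in any case, the convex capsule $C$ of $p$ in $P$ is the convex hull of $0,\lambda_1 e_1,\dots,\lambda_n e_n$ together with $q$, which one checks to be exactly the full-dimensional polytope cut out by the $n$ facet inequalities $\langle m,\rho_i\rangle\le-d_i$ of $P$ at $p$ plus the single inequality $\langle m,\sum_i\lambda_i^{-1}e_i^\vee\rangle\le 1$ and its reflection, so $C$ is itself a simplex-like polytope whose only facets not already facets of $P$ are $H$ and the reflected hyperplane $H'$.

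The second and central step is the equivalence ``$P\subset C$ $\iff$ $P_0\ge 0$''. For the forward direction: a ray $\rho_j$ with $j>n$ having a negative $i$-th coordinate means the facet of $P$ normal to $\rho_j$ permits $P$ to extend, along the $e_i$-edge direction, past the vertex $p_i=\lambda_i e_i$ and past $H$—more precisely one exhibits a point of $P$ violating the defining inequality of $H$ (or of $H'$), hence not in $C$. For the converse, if all entries of $P_0$ are non-negative, then every defining inequality $\langle m,\rho_j\rangle\le -d_j$ of $P$ with $j>n$ is, on the region already constrained by $\langle m,-e_i\rangle\le d_i$ for all $i$ (i.e. $m_i\ge -d_i$), implied by—or at least consistent with—the capsule inequalities, and a direct bounding argument shows $P$ lies between $H$ and $H'$; combined with the shared facets through $\rho_1,\dots,\rho_n$ this gives $P\subset C$. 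I would organize this as: reduce to checking the two capsule inequalities on the vertices of $P$, and relate the extreme value of $\langle m,\sum\lambda_i^{-1}e_i^\vee\rangle$ over $P$ to the signs of the entries of $P_0$ via linear programming duality with the ray description.

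The main obstacle I anticipate is the second step's converse direction: making precise why non-negativity of the coordinates of the ``far'' rays $\rho_{n+1},\dots,\rho_r$ forces all of $P$ to stay on the near side of the reflected hyperplane $H'$, rather than merely on the near side of $H$. Non-negativity of $P_0$ controls which orthant directions the extra facets point into, but bounding $\langle m,\sum\lambda_i^{-1}e_i^\vee\rangle$ from \emph{below} on $P$ (the $H'$ side) requires knowing that $P$ is actually bounded in the negative-coordinate directions by the facets through $\rho_1,\dots,\rho_n$ themselves, which is where completeness of $X(\Sigma)$ and the fact that $\sigma$ is a genuine cone of $\Sigma$ (so its rays really do appear among the $\rho_i$) get used. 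A clean way around this may be to avoid computing $H'$ explicitly and instead argue symmetrically: $P\subset C$ iff the reflection of $P$ across $H$ is again constrained by the same near-vertex data, which by Theorem~\ref{introthm1} is exactly the statement that the reflected fan agrees with $\Sigma$ near $\sigma$, i.e. that $\Sigma(1)\setminus\sigma(1)\subset-\sigma$. I would present the final argument in this symmetric form to keep the computation light.
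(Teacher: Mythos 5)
Your first step---reducing, via Theorem~\ref{introthm1}, to the condition that every ray of $\Sigma_P$ outside $\sigma_p(1)$ lies in $-\sigma_p$, i.e.\ that $P_0$ has non-negative entries in your coordinates---is correct, and it is in fact all the paper itself does: Corollary~\ref{pol} is stated there without proof, as a direct translation of Theorem~\ref{introthm1}. So the entire substance of your proposal is the second step, the equivalence between $P_0\geq 0$ and $P\subset C$, and that step is not established. Your structural description of $C$ is incorrect: $C=\operatorname{conv}(p,p_1,\dots,p_n,q)$ is not cut out by the facet inequalities of $P$ at $p$ together with $H$ and a parallel hyperplane $H'$ (already for $n=2$ the two upper edges of $C$ join $q$ to $p_1$ and $p_2$ and are not parallel to $H$; for $n\geq 3$ there are several facets through $q$); moreover the reflection is metric-dependent, your Euclidean formula should read $q=\tfrac{2}{\sum_i\lambda_i^{-2}}(\lambda_1^{-1},\dots,\lambda_n^{-1})$, and Figure~2 of the paper indicates that the intended point is rather $q=p_1+\dots+p_n-(n-1)p$, which for $n=2$ makes $C$ the parallelogram spanned by the two edge vectors at $p$.

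More seriously, the converse direction that you yourself flag as the main obstacle cannot be closed by the ``direct bounding argument'' you defer, because with $C$ the convex hull of those $n+2$ points the implication ``$P_0\geq 0\Rightarrow P\subset C$'' is false once $n\geq 3$. Take $P=[0,1]^3$, so $X(P)\simeq(\pp^1)^3$ and every vertex satisfies the ray condition of Theorem~\ref{introthm1}; at $p=0$ one has $p_i=e_i$, and any candidate reflected point $q$ (Euclidean or affine) has strictly positive coordinates, so every point of $C$ with vanishing last coordinate lies in $\operatorname{conv}(0,e_1,e_2)$ and hence $(1,1,0)\notin C$, i.e.\ $P\not\subset C$. (With the Euclidean reflection the equivalence fails even for $n=2$, e.g.\ for $[0,10]\times[0,1]$ at the origin.) So the postponed step is not a technicality: to get a true and provable statement you must fix the reading of the capsule. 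For $n=2$, with $C$ the parallelogram spanned by the edges at $p$, the equivalence does hold and can be proved cleanly: the unique non-coordinate facet normal at $\lambda_ie_i$ is non-negative if and only if $e_i$ lies in the outer normal cone of that vertex, i.e.\ if and only if $\max_P\langle m,e_i\rangle=\lambda_i$, and the monotone rotation of outer normals along the boundary of a polygon upgrades this to all rays. In dimension $\geq 3$ no such region works verbatim (even the parallelepiped spanned by the edges fails: $[0,2]^3\cap\{x-y+z\leq 3\}$ is a smooth polytope contained in the box at the origin whose normal fan contains the ray $(1,-1,1)\notin-\sigma_p$), so your closing suggestion to ``argue symmetrically'' cannot produce the claimed equivalence as stated; as written it presupposes exactly the identification it is meant to prove.
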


\begin{example}
The first polytope correspond to the blow-up of $\pp^2$
at the three invariant points, the shaded square corresponds
to the convex capsule of one of its vertices, from the picture we conclude that
such blow-up of $\pp^2$ is not quasi-transitive. The second polytope
correspond to the Hirzebruch surface $\Sigma_1$ and from Corollary~\ref{pol}
we deduce that the Hirzebruch surface is quasi-transitive.
Observe that the vertices on the top of the polytope
corresponding to $\Sigma_1$ are transitive on the polytope,
and the vertices on the bottom are not.

\begin{figure}[h]
  \begin{minipage}[h]{.5\linewidth}
    \centering
    \begin{tikzpicture}
\draw[gray,very thin, fill=gray!20!white] (0,0)--(-0.75,0)--(-0.75,-0.75)--(0,-0.75)--(0,0);
\draw[step=0.75cm, gray, very thin ] (-1.5,-1.5) grid (1.5,1.5);
\draw [black,very thick] (-0.75,-0.75) -- (0,-0.75)-- (0.75,0)--(0.75,0.75)--(0,0.75)--(-0.75,0)--(-0.75,-0.75);
    \end{tikzpicture}
  \end{minipage}%
  \begin{minipage}[h]{.5\linewidth}
    \centering
    \begin{tikzpicture}\draw[gray!20!white,very thin, fill=gray!20!white] (-0.75,0)--(-0.75,-0.75)--(-1.5,-0.75)--(-0.75,0);
\draw[step=0.75cm, gray, very thin ] (-2.25,-1.5) grid (1.5,0.75);
\draw [black, very thick, fill=gray!20!white] (-0.75,-0.75) -- (0,-0.75)-- (0.75,0)--(-0.75,0)--(-0.75,-0.75);
\draw[gray,very thin] (0,0)--(0,-0.75);
    \end{tikzpicture}
  \end{minipage}
  \begin{minipage}[h]{.5\linewidth}
    \caption{Blow-up of $\pp^2$}
    \label{fig:square}
  \end{minipage}%
  \begin{minipage}[h]{.5\linewidth}
    \caption{Hirzebruch surface}
    \label{fig:rect}
  \end{minipage}
\end{figure}
\end{example}

\begin{corollary}
Let $X(\Sigma)$ be a complete toric variety,
let $p_1$ and $p_2$ be two points which are transitive on the torus.
If no invariant divisor contains both points, then $X(\Sigma)\simeq \pp^1\times \dots \times \pp^1, 
p_1=[1:0]\times \dots \times [1:0]$ and $p_2=[0:1]\times \dots \times [0:1]$.
\end{corollary}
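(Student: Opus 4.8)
The plan is to run Theorem~\ref{introthm1} on each of the two points and then read off the fan from the orbit--cone correspondence. First I would record that, by Theorem~\ref{introthm1} applied to $p_1$ and to $p_2$, each $p_j$ is the invariant point attached to a smooth full-dimensional cone $\sigma_j\in\Sigma$ satisfying $\Sigma(1)\setminus\sigma_j(1)\subseteq-\sigma_j$. Choosing a basis of $N$ so that $\sigma_1=\langle-e_1,\dots,-e_n\rangle$, this says that every ray of $\Sigma$ other than $-e_1,\dots,-e_n$ has non-negative coordinates with respect to $\{e_1,\dots,e_n\}$.

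Next I would translate the hypothesis into combinatorics of the rays. By the orbit--cone correspondence \cite{CLS}, the invariant divisor $D_i$ contains the invariant point of $\sigma_j$ exactly when $\rho_i\in\sigma_j(1)$; hence the hypothesis that no invariant divisor contains both $p_1$ and $p_2$ is equivalent to $\sigma_1(1)\cap\sigma_2(1)=\emptyset$. Consequently every ray of $\sigma_2$ lies in $\Sigma(1)\setminus\sigma_1(1)\subseteq-\sigma_1=\langle e_1,\dots,e_n\rangle$, so $\sigma_2\subseteq\langle e_1,\dots,e_n\rangle$; and since the rays $-e_1,\dots,-e_n$ of $\sigma_1$ then lie in $\Sigma(1)\setminus\sigma_2(1)\subseteq-\sigma_2$, we get $e_i\in\sigma_2$ for all $i$, i.e. $\langle e_1,\dots,e_n\rangle\subseteq\sigma_2$. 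Therefore $\sigma_2=\langle e_1,\dots,e_n\rangle$.

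Now I would pin down the whole fan. Any ray of $\Sigma$ not among $\pm e_1,\dots,\pm e_n$ would lie both in $\Sigma(1)\setminus\sigma_1(1)\subseteq\langle e_1,\dots,e_n\rangle$ and in $\Sigma(1)\setminus\sigma_2(1)\subseteq\langle-e_1,\dots,-e_n\rangle$, hence would be $0$, which is impossible; so $\Sigma(1)=\{\pm e_1,\dots,\pm e_n\}$. Since a cone of a fan is strongly convex it cannot contain any pair $\{e_i,-e_i\}$, so each maximal cone of $\Sigma$ --- being full-dimensional because $\Sigma$ is complete --- equals $\langle\varepsilon_1 e_1,\dots,\varepsilon_n e_n\rangle$ for some $\varepsilon\in\{\pm1\}^n$; completeness then forces all $2^n$ such cones to occur, so $\Sigma$ is the fan of $(\pp^1)^n$ and $X(\Sigma)\simeq(\pp^1)^n$. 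Under this isomorphism $\sigma_1$ and $\sigma_2$ are ``antipodal'' vertices of the cube, and fixing on each $\pp^1$-factor the coordinates for which $\langle-e_i\rangle$ corresponds to $[1:0]$ and $\langle e_i\rangle$ to $[0:1]$ yields $p_1=[1:0]\times\dots\times[1:0]$ and $p_2=[0:1]\times\dots\times[0:1]$.

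The step I expect to need the most care is the very first one: Theorem~\ref{introthm1} as stated produces \emph{some} smooth full-dimensional cone with the required property, so to apply it to both $p_1$ and $p_2$ one must check that \emph{every} transitive invariant point is the invariant point of such a cone. This can be done by re-running the orbit argument of the proof of Theorem~\ref{introthm1} --- invoking the $\Aut(X(\Sigma))$-orbit criterion \cite[Theorem 3.7]{Baz13} with the maximal monoid $\Gamma(0)$ --- separately for $\sigma_1$ and $\sigma_2$. Once that is in place, the remaining steps are only the short chain of inclusions above together with the standard fact that a complete fan whose rays are the coordinate semi-axes is the fan of $(\pp^1)^n$.
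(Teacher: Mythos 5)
Your proposal is correct and follows essentially the same route as the paper: apply Theorem~\ref{introthm1} to each transitive point, use the hypothesis to see that $\sigma_1(1)$ and $\sigma_2(1)$ are disjoint, deduce $\sigma_2=-\sigma_1$, and then identify the fan with that of $(\pp^1)^n$. You simply spell out more explicitly the final step (that $\Sigma(1)=\{\pm e_1,\dots,\pm e_n\}$ and completeness forces all $2^n$ orthant cones) and the per-point applicability of Theorem~\ref{introthm1}, both of which the paper uses implicitly.
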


\begin{proof}
Let $\sigma_1$ be the full-dimensional cone of $p_1$
and $\sigma_2$ the full-dimensional cone of $p_2$.
Since no invariant divisor contains both $p_1$ and $p_2$, 
then the rays of $\sigma_1$ and $\sigma_2$ are disjoint.
Moreover, by Theorem~\ref{introthm1}
we have that 
$- \sigma_1 (1) \subset \sigma_2(1)$ 
and $-\sigma_2(1) \subset \sigma_1(1)$
and both cones are smooth,
then we conclude that $\sigma_1=-\sigma_2$,
applying an automorphism on $N_\qq$ that
maps $\sigma_1$ to $\langle -e_1,\dots, -e_n\rangle$
we conclude the statement.  
\end{proof}

Now, we give further description of the Demazure roots  
of a quasi-transitive toric variety $X(\Sigma)$.
As always, we assume that $\langle -e_1,\dots, -e_n \rangle \in \Sigma$
is transitive on the torus.
By Theorem~\ref{introthm1} we can assume that the matrix $Q$
equals $\begin{bmatrix} P^t_0 & \Id_{r-n}\end{bmatrix}$,
where $P_0$ is a matrix whose entries are non-negative.
Denote by 
\[
I_j =\{ i\mid \text{ the $i$-th column of $Q$ is $e_j$ }\},
\]
for $1\leq j \leq r-n$ and  
\[
I=\{ i \mid 1\leq i \leq n \text{ and $i$ is not contained in some $I_j$}\}.
\]
Observe that any $i\in \{1, \dots, r\}$ is contained in some of such sets.
Moreover, we have that $i\in \{1,\dots, n\}$ belongs to $I_j$
exactly when the $(j+n)$-th column of $P$ has
Demazure root $-e_i$
and $i\in I$ if and only if no column of $P$ has Demazure root $-e_i$.

\begin{proposition}
Let $X(\Sigma)$ be a quasi-transitive toric variety
with 
\[
\sigma =\langle -e_1,\dots, -e_n \rangle \in \Sigma
\]
being transitive on the torus.
If $\rho\in \Sigma(1) \setminus \sigma(1)$
has a Demazure root $m$, then $m=-e_i$ for some
$i\in \{1,\dots, n\}$. 
\end{proposition}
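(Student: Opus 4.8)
Recall the structure: $X(\Sigma)$ is quasi-transitive with $\sigma = \langle -e_1,\dots,-e_n\rangle$ transitive on the torus, and by Theorem~\ref{introthm1} all entries of $P_0$ are non-negative. So the rays are $\rho_i = -e_i$ for $1\le i\le n$, and $\rho_{n+1},\dots,\rho_r$ are the columns of $P_0$, hence vectors with non-negative coordinates in the basis $\{e_1,\dots,e_n\}$.

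Now let $\rho = \rho_\ell$ with $\ell \in \{n+1,\dots,r\}$ (i.e. $\rho \in \Sigma(1)\setminus\sigma(1)$), and suppose $m \in M$ is a Demazure root of $\rho_\ell$. By definition $\langle m, \rho_\ell\rangle = -1$ and $\langle m, \rho_j\rangle \ge 0$ for all $j\ne \ell$.

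Write $m = (m_1,\dots,m_n)$ in the dual basis. For $1\le i\le n$ we have $\rho_i = -e_i$, so $\langle m,\rho_i\rangle = -m_i \ge 0$, i.e. $m_i \le 0$ for every $i$. On the other hand $\langle m,\rho_\ell\rangle = \sum_{i=1}^n m_i (\rho_\ell)_i = -1$, where each $(\rho_\ell)_i \ge 0$ since $\rho_\ell$ is a column of $P_0$. Since all $m_i \le 0$ and all $(\rho_\ell)_i \ge 0$, each term $m_i(\rho_\ell)_i \le 0$; their sum is $-1$. The only way a sum of non-positive integers equals $-1$ is that exactly one term equals $-1$ and the rest are $0$. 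So there is a unique index $i_0$ with $m_{i_0}(\rho_\ell)_{i_0} = -1$, forcing $m_{i_0} = -1$ and $(\rho_\ell)_{i_0} = 1$, while $m_i (\rho_\ell)_i = 0$ for $i\ne i_0$.

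**Ruling out extra nonzero coordinates of $m$.** It remains to show $m_i = 0$ for all $i\ne i_0$, i.e. that $m = -e_{i_0}$. Suppose some $m_{i_1} < 0$ with $i_1 \ne i_0$; then necessarily $(\rho_\ell)_{i_1} = 0$. I would derive a contradiction using another ray. First, $\rho_\ell \ne 0$ and $\rho_\ell$ is primitive with non-negative coordinates, with $(\rho_\ell)_{i_0}=1$. The issue is whether $m$ can have support larger than $\{i_0\}$ while still satisfying $\langle m,\rho_j\rangle \ge 0$ for all rays $j$. Note $\langle m, \rho_j\rangle = \sum_i m_i (\rho_j)_i$; since all $m_i \le 0$ and all $(\rho_j)_i \ge 0$ for $j > n$, we get $\langle m,\rho_j\rangle \le 0$ for every $j \in \{n+1,\dots,r\}$. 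Combined with the Demazure condition $\langle m,\rho_j\rangle \ge 0$, we conclude $\langle m,\rho_j\rangle = 0$ for every $j\in\{n+1,\dots,r\}\setminus\{\ell\}$, and moreover $m_i(\rho_j)_i = 0$ for every such $j$ and every $i$. So for each $i$ with $m_i < 0$, the $i$-th coordinate of every column $\rho_j$ ($j>n$, $j\ne \ell$) vanishes, and $(\rho_\ell)_i = 0$ unless $i = i_0$.

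**Finishing via completeness.** The plan is now: for $i = i_1$ with $m_{i_1}<0$, every ray of $\Sigma$ has non-positive $i_1$-th coordinate — indeed $\rho_i = -e_i$ gives coordinate $-\delta_{ii_1} \le 0$, and all $\rho_j$ with $j>n$ have $i_1$-th coordinate $0$ (shown above, since $(\rho_\ell)_{i_1} = 0$ because $i_1\ne i_0$, and $(\rho_j)_{i_1}=0$ for $j\ne\ell$). Thus no ray of $\Sigma$ has positive $i_1$-th coordinate, so the functional $e_{i_1}^* \in M_\qq$ is non-negative on every cone of $\Sigma$, hence on $\bigcup_{\tau\in\Sigma}\tau$. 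But $X(\Sigma)$ is complete, so $\bigcup_{\tau\in\Sigma}\tau = N_\qq$, which cannot support a non-zero non-negative functional — contradiction. Therefore no such $i_1$ exists, $m = -e_{i_0}$, and the proposition is proved. The main obstacle is precisely this last step: squeezing the Demazure inequalities $\langle m,\rho_j\rangle \ge 0$ against the sign constraints forced by $P_0 \ge 0$, then invoking completeness of the fan to kill the remaining freedom in $m$.
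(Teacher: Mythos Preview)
Your proof is correct and follows essentially the same route as the paper: use Theorem~\ref{introthm1} to get non-negative coordinates for the rays outside $\sigma$, combine with the Demazure inequalities to force $m_i\le 0$ and isolate a unique index $i_0$ with $m_{i_0}=-1$, then rule out any further negative $m_{i_1}$ by showing all rays would have non-positive $i_1$-th coordinate, contradicting completeness of $\Sigma$. One trivial slip: in the last paragraph the functional $e_{i_1}^*$ is non-\emph{positive} (not non-negative) on the support, but the contradiction with $\bigcup_{\tau\in\Sigma}\tau=N_\qq$ is of course the same.
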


\begin{proof}
Recall from Theorem~\ref{introthm1}
that the entrie of $\rho$ with respect
to the basis $\{e_1,\dots, e_n\}$
are non-negative.
Since $\langle m, e_i \rangle \leq 0$
for each $1\leq i\leq n$
we conclude that there exists
$j\in \{1, \dots, n\}$ such that
the $j$-th coordinate of $\rho$ is $1$
and $\langle m, e_j \rangle =-1$.
Morever, for each $i\neq j$
we have that the $i$-th coordinate of $\rho$
is zero or $\langle m , e_i \rangle=0$.
Let $i\neq j$ and assume that $\langle m, e_i \rangle <0$,
then we have that the $i$-th coordinate of any ray 
$\rho'\in \Sigma(1)$
with $\rho' \not\in \{1, \dots, n\}$ and $\rho'\neq \rho$,
vanishes, contradicting the completeness of $\Sigma$.
\end{proof}

\begin{remark}
Clearly, the group of $H$-equivariant automorphisms
of $X(\widehat \Sigma)$ acts linearly on the variables
$\{ x_j \mid i \in I_j\}$. 
Moreover, the rays $-e_1, \dots, -e_n$
of a quasi-transitive toric variety $X(\Sigma)$
can have several Demazure roots.
For a ray $-e_i$ with $i\in I$,
if there existst $j$ with $k=|I_j \cap \{1, \dots, n\}|\geq 1$
such that the $(i,j+n)$ entry of the matrix $P$
is greater or equal than $1$, then $-e_i$
has at least $k$ Demazure roots given by
$\{ e_i -e_k \mid k \in I_j \cap \{1, \dots, n\}\}$.
\end{remark}

\section{Linear Systems via Cox Rings and Degenerations}\label{sec3}

In this section, we study linear system of hypersurfaces 
on quasi-transitive toric varieties passing through finitely
many points in very general position with prescribed multiplicities.
Also, we give a new definition
of {\em special linear systems} on quasi-transitive toric varieties.

First, we start recalling how to construct a basis of the Riemann-Roch
space of a $T$-invariant divisor on a quasi-transitive toric variety $X(\Sigma)$.
Recall that we assume that the cone $\langle -e_1, \dots, -e_n \rangle \in \Sigma$
is transitive on the torus.
In this case the divisor classes of the 
$T$-invariant divisors $D_i$ with $i \geq n+1$ generates
$\Cl(X(\Sigma))$. 
Given a class $[D] \in \Cl(X(\Sigma))$ 
we can pick a representative of the class of the form $D=\sum_{i=n+1}^r d_i D_i$,
which we call {\em standard form}.
Given $m\in M$ we denote by 
\[ 
x^m = x^{P^*(m)} \in \kk[x_1^{\pm 1}, \dots, x_r^{\pm 1}]
\] 
where $P^*$ is defined by the exact sequence ~\ref{exact}.
We define an element $x^D \in k[x_1, \dots, x_r]$
by
\[
x^D =\prod_{i=n+1}^r x_i^{d_i},
\]
then we have that 
\[
H^0\left(X(\Sigma), \mathcal{O}_{X(\Sigma)}(D)\right) \simeq
\bigoplus_{m\in P(D)\cap M} \kk x^D x^m,
\]
where 
\[
P(D)= \{ m \in M_\qq \mid \langle m , \rho_i \rangle \leq -d_i, 
\text{ for $i\in \{1, \dots, r\}$}\}.
\]
Observe that all the elements $x^Dx^m$ with
$m\in P(D)\cap M$ are homogeneous of the same degree 
in the Cox ring of $X(\Sigma)$
and in this case the polytope $P(D)$ is contained
in the first orthant of $M_\qq$.
Then, the dimension of the Riemann-Roch space of $D$
is equal to the number of lattice points of $P(D)$.
	
\begin{remark}(Points in very general position)
Given a quasi-transitive toric variety $X(\Sigma)$
and $k$ points $p_1, \dots, p_k \in X(\Sigma)$,
consider the scheme $X(\Sigma)_{[k]}$
parametrizing $k$-tuples of points in $X(\Sigma)$
and let $\mathcal{P} \in X(\Sigma)_{[k]}$
be the point $p_1+\dots +p_k$.
Given non-negative integers $\mu_1, \dots, \mu_k$
denote by $\mathcal{U}([D], \mu_1, \dots, \mu_k) 
\subset X(\Sigma)_{[k]}$ the open subset where 
$\dim\left( \mathcal{L}_{[D]} (p_1^{\mu_1}, \dots, p_k^{\mu_k})\right)$ 
attains its minimal value.
We denote 
\[
\mathcal{U}= \bigcap_{([D],\mu_1, \dots, \mu_k)}
\mathcal{U}([D], \mu_1, \dots, \mu_k),
\]
where the intersection runs over all the $k$-tuples of 
non-negative integers and all the divisor classes $[D]\in \Cl(X(\Sigma))$.
We say that $p_1, \dots, p_k$ are in {\em very general position}
if the corresponding $\mathcal{P}$ is in $\mathcal{U}$.
Observe that when $p_1, \dots, p_k\in X(\Sigma)$
are points in very general position,
then $\mathcal{L}_{[D]}(p_1^{\mu_1}, \dots, p_k^{\mu_k})$
is independent of $p_1, \dots, p_k$, so we denote such linear
system by $\mathcal{L}_{[D]}(\mu_1, \dots, \mu_k)$.
\end{remark}

\begin{notation}\label{notmat}
Given a quasi-transitive toric variety $X(\Sigma)$,
a divisor in standard form $D$ of $X(\Sigma)$, 
$\mu_1, \dots, \mu_k$ non-negative integers
and $k$ points $p_1,\dots, p_k \in (\kk^*)^n$
in very general position,
we will construct a matrix denoted by
$M(D,\mu_1, \dots, \mu_k)$.

For $\mu \in \zz_{\geq 0}$ we denote
\[
\Delta(\mu )=\{ u \in M \mid 
\langle u , e_1+\dots +e_n \rangle \leq \mu
\text{ and } \langle u,e_i\rangle \geq 0 \text{ for each $i$}\}.
\]
For each $u\in \Delta(\mu)$ we denote
\[
\frac{d}{d x^u} =
\frac{d^{\langle u,e_1 \rangle +\dots + \langle u, e_n\rangle}}{ d x_1^{\langle u, e_1 \rangle}\dots d x_n^{\langle u, e_n \rangle}}.
\]
Given a lattice point $m\in P(D)\cap M$
we denote by $y^m$ the monomial $x^Dx^m$ evaluated
at $x_{n+1}=\dots =x_r=1$.
We define $M(D,\mu_i)$
to be the matrix whose columns are indexed
by $P(D)\cap M$, its rows are indexed
by $\Delta(\mu_i)$, and the entry corresponding
to $(m,u)$ is 
\[
\frac{d y^m }{d x^u}\Big|_{p_i}.
\]
Then we define
\[
M(D,\mu_1, \dots, \mu_k) = 
\begin{bmatrix}
M(D,\mu_1) \\
M(D,\mu_2) \\
\vdots \\
M(D,\mu_k)
\end{bmatrix}.
\]
Given a divisor $D$ of $X(\Sigma)$, we denote by
$M(D, \mu_1,\dots, \mu_k)$ the matrix\\ $M(D', \mu_1, \dots, \mu_k)$,
where $D'$ is in standard form and linearly equivalent to $D$.
\end{notation}

\begin{definition}\label{deftor}
Given a quasi-transitive toric variety $X(\Sigma)$, a divisor
$D$ of $X(\Sigma)$ in standard form and $\mu \in \zz_{\geq 0}$
we denote by $\Delta(D,\mu)=\Delta(\mu)\setminus P(D)^c$.
The {\em toric virtual dimension} of $\mathcal{L}$ is 
\[
\ttvdim(\mathcal{L})=\dim([D])
-\sum_{i=1}^k \Delta(D,\mu_i)-1.
\]
The {\em toric expected dimension} of $\mathcal{L}$
is $\ttedim(\mathcal{L})=\max(\ttvdim(\mathcal{L}),-1)$.
We say that $\mathcal{L}$ is {\em toric special}
if the inequality $\dim(\mathcal{L})>\ttedim(\mathcal{L})$ holds,
otherwise we say that the system is {\em toric non-special}.
\end{definition}

\begin{theorem}\label{coxlinear}
Let $X(\Sigma)$ be a quasi-transitive toric variety $X(\Sigma)$,
$D$ be a divisor of $X(\Sigma)$ in standard form
and $\mu_1, \dots, \mu_k$ non-negative integers.
Then 
\[\dim \left( \mathcal{L}_{[D]}(\mu_1, \dots, \mu_k)\right)
= \rank \left( M(D,\mu_1, \dots, \mu_k) \right).\]
In particular, the inequalities
\[
\dim(\mathcal{L})\geq \tedim(\mathcal{L})\geq \edim(\mathcal{L})
\]
always holds for $\mathcal{L}=\mathcal{L}_{[D]}(\mu_1,\dots, \mu_k)$.
\end{theorem}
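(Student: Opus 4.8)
The plan is to identify $\dim(\mathcal{L}_{[D]}(\mu_1,\dots,\mu_k))$ with the rank of the matrix $M(D,\mu_1,\dots,\mu_k)$ by expressing the vanishing conditions of the linear system in Cox coordinates. First I would recall that, by the discussion preceding Notation~\ref{notmat}, a section of $\mathcal{O}_{X(\Sigma)}(D)$ for $D$ in standard form is a $\kk$-linear combination $\sum_{m\in P(D)\cap M} c_m\, x^D x^m$, and since the points $p_i$ lie in $(\kk^*)^n$ the evaluation $x_{n+1}=\dots=x_r=1$ turns $x^D x^m$ into the Laurent monomial $y^m$ in $x_1,\dots,x_n$. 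The key observation is that for a point $p$ of the torus, passing through $p$ with multiplicity $\geq \mu_i$ is precisely the vanishing at $p_i$ of all partial derivatives $\frac{d}{dx^u}$ with $u\in\Delta(\mu_i)$: this is the standard translation between ``multiplicity $\mu_i$'' and ``all partials of order $<\mu_i$ vanish,'' where the index set $\Delta(\mu_i)=\{u\in M\mid \langle u, e_1+\dots+e_n\rangle\leq\mu_i,\ \langle u,e_j\rangle\geq 0\}$ enumerates exactly the monomial differential operators of order at most $\mu_i$ in the $n$ torus coordinates (after a shift normalizing $p_i$, or working directly as the matrix entries $\frac{dy^m}{dx^u}|_{p_i}$ are written). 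Hence the space of sections vanishing to the prescribed orders is the kernel of the linear map $(c_m)\mapsto \bigl(\sum_m c_m \frac{dy^m}{dx^u}|_{p_i}\bigr)_{i,u}$, whose matrix is exactly $M(D,\mu_1,\dots,\mu_k)$; therefore $\dim\mathcal{L}_{[D]}(\mu_1,\dots,\mu_k) = \dim(\ker M) - 1 = \#\bigl(P(D)\cap M\bigr) - 1 - \rank M$. Wait---I need to be careful: the stated identity is $\dim\mathcal{L} = \rank M$, so I would instead phrase it via the well-definedness of the rank on very general points, concluding that $\rank M$ computes the number of independent conditions and the projectivized kernel has dimension $\#(P(D)\cap M)-1-(\#(P(D)\cap M) - \rank M)$... so in fact the correct bookkeeping is that $\dim([D]) = \#(P(D)\cap M)-1$ and the imposed conditions cut this down, giving $\dim\mathcal{L} = \dim([D]) - (\text{number of independent conditions})$; matching this with the theorem's formula forces $\rank M$ to be read as the corank, i.e.\ one interprets $M$ as the map onto section space and $\rank M$ as the dimension of its image inside the projectivization---I would reconcile this with the exact normalization used in Notation~\ref{notmat} when writing the proof.

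Next, for the ``in particular'' inequalities I would argue as follows. For the right-hand inequality $\ttedim(\mathcal{L})\geq\edim(\mathcal{L})$, it suffices by definition of $\max(\cdot,-1)$ to show $\ttvdim(\mathcal{L})\geq\vdim(\mathcal{L})$, which reduces to the pointwise inequality $\#\Delta(D,\mu_i)\leq\binom{n+\mu_i-1}{n}$. Since $\Delta(D,\mu_i)=\Delta(\mu_i)\setminus P(D)^c\subseteq\Delta(\mu_i)$, this follows once I check $\#\Delta(\mu_i)=\binom{n+\mu_i-1}{n}$, which is the standard count of monomials of degree $\leq \mu_i - 1$ in $n$ variables (or the appropriate shifted count matching the $\binom{n+\mu_i-1}{n}$ appearing in $\vdim$)---a routine lattice-point computation. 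For the left-hand inequality $\dim(\mathcal{L})\geq\ttedim(\mathcal{L})$: using the rank formula, $\dim(\mathcal{L}) = \dim([D]) - \rank M(D,\mu_1,\dots,\mu_k)$ (in the corank reading), and $\rank M \leq \sum_i \rank M(D,\mu_i) \leq \sum_i \#\{\text{rows of } M(D,\mu_i) \text{ supported on } P(D)\} = \sum_i \#\Delta(D,\mu_i)$, because a partial derivative operator indexed by $u$ kills every monomial $x^D x^m$ with $m\in P(D)$ unless $u\in\Delta(D,\mu_i)$ (the point of intersecting with $P(D)$ rather than $P(D)^c$ is exactly that rows indexed by $u\in P(D)^c\cap\Delta(\mu_i)$ are identically zero on all the relevant monomials). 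Combining, $\dim(\mathcal{L})\geq\dim([D]) - \sum_i\#\Delta(D,\mu_i) = \ttvdim(\mathcal{L})+1 > \ttvdim(\mathcal{L})$... and since $\dim(\mathcal{L})\geq -1$ always, we get $\dim(\mathcal{L})\geq\max(\ttvdim(\mathcal{L}),-1)=\ttedim(\mathcal{L})$.

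The main obstacle I anticipate is twofold: getting the index-set bookkeeping exactly right (the off-by-one shifts in $\Delta(\mu)$ versus $\binom{n+\mu-1}{n}$, and whether $\rank M$ in the theorem means the rank of the evaluation map or its corank), and justifying rigorously that $\rank M(D,\mu_1,\dots,\mu_k)$ on very general $p_1,\dots,p_k$ equals the generic rank---this is where ``very general position'' enters, via the definition of $\mathcal{U}$ as the locus where $\dim\mathcal{L}_{[D]}(p_1^{\mu_1},\dots,p_k^{\mu_k})$ is minimal, equivalently where the rank of $M$ is maximal, so that on $\mathcal{U}$ the matrix rank is a well-defined function of $(D,\mu_1,\dots,\mu_k)$ alone. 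I would handle this by noting that the entries of $M$ are polynomial (after clearing denominators, Laurent-polynomial) functions of the coordinates of $p_1,\dots,p_k$, so the maximal-rank locus is open and nonempty, hence contains $\mathcal{U}$; then the identity $\dim\mathcal{L} = \rank M$ holds on all of $\mathcal{U}$ as claimed. The subordinate structural input---that differentiation in torus coordinates genuinely detects multiplicity at a torus point, and that the base point being an invariant point transitive on the torus lets us reduce general-position points on $X(\Sigma)$ to torus points via an automorphism (Theorem~\ref{introthm1})---I would cite from the quasi-transitive setup rather than reprove.
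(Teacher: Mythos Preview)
Your overall strategy is correct and your argument for the chain of inequalities matches the paper's: rows indexed by $u\in\Delta(\mu_i)\cap P(D)^c$ are null (this uses, implicitly, that $P(D)$ is a down-set in $\zz_{\geq 0}^n$ because the facet normals $\rho_{n+1},\dots,\rho_r$ have non-negative entries by quasi-transitivity), so the number of independent conditions is at most $\sum_i|\Delta(D,\mu_i)|$, and $\Delta(D,\mu_i)\subseteq\Delta(\mu_i)$ gives the second inequality. Your handling of ``very general'' via polynomial dependence of the entries on the $p_{i,j}$ is also essentially what the paper does.

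The genuine methodological difference is in how the rank identity itself is established. You work entirely in the open torus chart: set $x_{n+1}=\dots=x_r=1$, observe that the $p_i$ lie in $(\kk^*)^n$, and compute multiplicity via partials in these affine coordinates. The paper instead stays in the Cox ring: it writes down the ideal $I(p^{-1}(p_i))\subset\kk[x_1,\dots,x_r]$ of the fibre explicitly, proves by induction on $\mu$ that the saturation $(I(p^{-1}(p_i))^{\mu}:J(\Sigma)^\infty)$ equals $I(p^{-1}(p_i))^{\mu}$, and then introduces an intermediate matrix $M'(D,\mu_1,\dots,\mu_k)$ whose entries lie in $\kk[x_{n+1},\dots,x_r]$ (the partials evaluated along the fibre equations rather than at $x_{n+1}=\dots=x_r=1$), finally showing $\rank M=\rank M'$ by a minor-by-minor specialisation argument. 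Your route is shorter and perfectly legitimate for torus points on a smooth chart; the paper's route makes the Cox-ring description of the multiplicity ideal explicit, which is what justifies the ``via Cox rings'' in the section title and is reusable for points not in the torus.

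Your hesitation about whether $\rank M$ denotes the rank or the corank is warranted: the formula $\dim\mathcal{L}=\dim([D])-\rank M$ is what is actually proved and what is used to derive $\dim\mathcal{L}\geq\ttvdim\mathcal{L}$ from the null-row observation, so when you write up the argument you should simply state and use that version rather than the literal displayed equality.
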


\begin{proof}
Recall that an hypersurface of $X(\Sigma)$
whose divisor class is $[D]$ and pass through
$k$ very general points $p_1, \dots, p_k \in X(\Sigma)$
with multiplicities $\mu_1, \dots,\mu_k$ is defined 
by a polynomial in $\kk[x_1, \dots, x_r]$
of degree $[D]$ which is contained in the ideal 
\begin{equation}\label{idealmul}
\cap_{i=1}^k \left( I(p^{-1}(p_i))^{\mu_k} \colon J(\Sigma)^\infty\right),
\end{equation}
where $I(p^{-1}(p_i))$ denote the ideal on $\Cox(X(\Sigma))$
defining the fiber over $p_i$.
Moreover, we have that
\[
I(p^{-1}(p_i))=\left\langle 
x_j -p_{i,j}\prod_{k=n+1}^r x_k^{\rho_{k,j}} \mid
j\in\{1, \dots, n\}
\right\rangle,
\]
for $i\in \{1, \dots, k\}$,
where $p_{i,j}$ and $\rho_{k,j}$ denotes the $j$-th 
coordinate of $p_i$ and $\rho_{k}$ respectively.
Observe that the fibers are irreducible.
We claim that \[(I(p^{-1}(p_i))^{\mu_k} \colon J(\Sigma)^\infty)=I(p^{-1}(p_i))^{\mu_k}\]
for each $i$ and $\mu_k$.
First we prove the case $\mu_k=1$. If $J(\Sigma)^\mu f \subset I(p^{-1}(p_i))$
for some element $f\in \kk[x_1, \dots, x_r]$ and $\mu \in \zz_{\geq 0}$, 
then the vanishing set of the ideal generated by $J(\Sigma)^\mu f$
is contained in $V(x_{n+1} \dots x_r)\cup V(x_1)\cup \dots \cup V(x_n) \cup V(f)$, 
and since $p^{-1}(p_i)$ is irreducible and is not contained in some $V(x_i)$
or in $V(x_{n+1}\dots x_r)$,
we conclude that $p^{-1}(p_i) \subset V(f)$, proving the claim.
Now, we proceed by induction on $\mu_k$ for some fixed $i$.
Clearly we have the inclusion $\supset$,
for the other inclusion pick \[ f\in (I(p^{-1}(p_i))^{\mu_k}\colon J(\Sigma)^\infty)
\subset (I(p^{-1}(p_i))^{\mu_{k-1}}\colon J(\Sigma)^\infty) = I(p^{-1}(p_i))^{\mu_{k-1}},\]
then we can write $f=gh$ for some $g\in I(p^{-1}(p_i))^{\mu_{k-1}}$ 
and $h\in \kk[x_1, \dots, x_r]$.
Thus, for some $\mu \in \zz_{\geq 0}$ we have that 
$J(\Sigma)^\mu gh \subset I(p^{-1}(p_i))^{\mu_k}$. 
Taking the derivatives up to the order $\mu_k-1$ with
respect to the variables $x_1,\dots, x_n$ of
the elements of $J(\Sigma)^\mu gh$, the above implies
that $h\in (I(p^{-1}(p_i)) \colon J(\Sigma)^\infty)= I(p^{-1}(p_i))$ 
concluding the claim.

Observe that a polynomial $f\in \kk[x_1, \dots, x_n]$
belongs to 
\[
\cap_{i=1}^k \left( I(p^{-1}(p_i))\right)^{\mu_k}
\]
if and only if its derivatives up to order $\mu_i-1$
with respect to $x_1,\dots, x_n$ vanishes at
\begin{equation}\label{eva}
x_j=p_{i,j} \prod_{k=n+1}^r x_k^{\rho_{k,j}},
\end{equation}
for $j\in \{1, \dots, n\}$ and $i\in \{1, \dots, k\}$.
Now, we construct a matrix whose rank is the 
dimension of $\mathcal{L}_{[D]}(\mu_1, \dots, \mu_k)$.
Let $M'(D,\mu_i)$ be the matrix whose columns
are indexed by $P(D)\cap M$, 
its rows are indexed by $\Delta(\mu_i)$
and the entry corresponding to $(m,u)$ is the element
\[
\frac{d y^m}{d x^u},
\]
evaluated at ~\ref{eva}.
Observe that the entries of $M'(D,\mu_i)$ are elements of
the ring $\kk[x_{n+1},\dots, x_r]$.
Then we define
\[
M'(D,\mu_1, \dots, \mu_k) = 
\begin{bmatrix}
M'(D,\mu_1) \\
M'(D,\mu_2) \\
\vdots \\
M'(D,\mu_k)
\end{bmatrix}.
\]
Now, it suffices to show that $M=M(D,\mu_1, \dots, \mu_k)$
and $M'=M'(D,\mu_1, \dots, \mu_k)$ have the same rank.
Since both matrices have the same size, we can identify its squares submatrices.
Observe that if a square submatrix $N'$ of $M'$ 
has determinant equal to $0$, then the corresponding square submatrix $N$
of $M$ has determinant equal to $0$, since the matrix 
$M$ is obtained from $M'$ by evaluating $x_{n+1}=\dots =x_r=1$.
On the other hand, if $N$ is a square submatrix of $M$ whose determinant
is zero, then the determinant of the corresponding square matrix $N'$ of $M'$
is a polynomial in $\kk[x_{n+1}, \dots, x_r]$ which
vanishes for $x_{n+1}=p_{n+1}, \dots, x_r=p_r$, where $(p_{n+1},\dots,p_r)\in \kk^{r-n}$
is very general, then the determinant of $N'$ is zero.

For the second statement, 
observe that the rows of the matrix $M(D,\mu_1, \dots, \mu_k)$
corresponding to $u\in M\cap P(D)^c$ are null, 
then the first inequality holds 
and the second inequality holds by definition.
\end{proof}

\begin{example}
In ~\cite{LM14} there is a complete study of 
the toric speciality in the case of $(\pp^1)^n$.
In this case, the toric contribution through
a point $p_i$ correspond to the fibers
of the projections $(\pp^1)^n \rightarrow (\pp^1)^k$,
with $k<n$, passing through the point $p_i$.
In this case, there is an explicit formula to compute
$\ttedim(\mathcal{L})$.
\end{example}

From now, we will turn to study linear systems 
in smooth projective varieties. $P$ will denote a smooth polytope
and $X(P)$ the associated toric variety.
We denote by $\mathcal{L}_P(\mu_1, \dots, \mu_k)$ the linear
system of hypersurfaces of $X(P)$ of divisor class corresponding
to the ample divisor determined by $P$
passing through $k$ very general points with multiplicities 
$\mu_1, \dots, \mu_k$ respectively.
Morever, we denote by $M(P, \mu_1,\dots, \mu_k)$
the matrix introduced in ~\ref{notmat} corresponding to the 
divisor associated to $P$ in $X(P)$.

\begin{definition}
Given a polytope $P$, we say that it is in 
{\em standard form} if it is a full-dimensional polytope
contained in the positive orthant of $M_\qq$,
the origin its a transitive smooth vertex of $P$,
and all the one-dimensional vertices of $P$ passing through
the origin are contained in the lines $\qq e_i$
for some $i\in \{1,\dots, n\}$.
\end{definition}

\begin{notation}\label{notdeg}
Let $P$ be a smooth polytope 
in standard form.
Given $i\in \{1, \dots, n\}$ and $c\in \zz_{\geq 0}$
we define the polytopes 
\[
P^+_{c-1} =\{m \in P \mid m_i \geq c-1\}, \qquad
P^+_c = \{m\in P \mid m_i \geq c\},
\]
\[
P^-_{c-1} =\{m \in P \mid m_i \leq c-1\}, \qquad
P^-_c = \{m\in P \mid m_i \leq c\}.
\]
Observe that the origin is transitive on
$P^-_c$ and $P^-_{c-1}$,
while the point $ce_i$ is transitive on $P^+_c$
and $(c-1)e_i$ is transitive on $P^+_{c-1}$.
Given $s\in \{1, \dots, k\}$ we denote
\[
\mathcal{L}^-=\mathcal{L}_{P^-_{c-1}}(\mu_1, \dots, \mu_s),
\qquad
\mathcal{L}^+=\mathcal{L}_{P^+_c}(\mu_{s+1},\dots, \mu_k).
\] 
Finally, given $\mu,c\in \zz_{\geq 0}$ we denote by
\[
\Delta(c,\mu)=\left\{
m\in M \mid \langle m , e_i \rangle \geq c \text{ for each $i$ and } \sum_{j\neq i} \langle m, e_j\rangle  + ( \langle m, e_i \rangle -c)\leq \mu \right\}.
\]
\end{notation}

\begin{remark}
The linear systems $\mathcal{L}^+$ and
$\mathcal{L}^-$ arise naturally from toric degenerations
of the linear system
$\mathcal{L}_P(\mu_1, \dots, \mu_k)$.
Indeed, 
we have a toric degeneration $X(\mathcal{P}_{c-1})\rightarrow \kk$
whose fiber over $t\in \kk^*$ is isomorphic to $X(P)$ and
the fiber over $0  \in \kk$ is isomorphic to the union
$X(P^-_{c-1})\cup X(P^+_{c-1})$ glued along
$X(P^-_{c-1}\cap P^+_{c-1})$. 
Given the linear system $\mathcal{L}_P(\mu_1, \dots, \mu_k)$
we can specialize the first $s\leq k$ of such points
generically to $X(P^-_{c-1})$ and the other $k-s$
points generically to $X(P^+_{c-1})$ to obtain
a degeneration of the linear system into 
\[
\mathcal{L}_{P^-_{c-1}}(\mu_1, \dots, \mu_s)
\text{ and }
\mathcal{L}_{P^+_{c-1}}(\mu_{s+1},\dots, \mu_k).
\]
Analogously, we can degenerate $\mathcal{L}$ to obtain
\[
\mathcal{L}_{P^-_{c}}(\mu_1, \dots, \mu_s)
\text{ and }
\mathcal{L}_{P^+_{c}}(\mu_{s+1},\dots, \mu_k).
\]
\end{remark}

\begin{theorem}\label{degeneration}
Using the notation of ~\ref{notdeg},
assume that the following conditions hold:
\begin{itemize}
\item $\mathcal{L}^+$ and $\mathcal{L}^-$
are toric non-special and $(\ttvdim(\mathcal{L^+})+1)(\ttvdim(\mathcal{L^-})+1)\geq 0$,
\item $\Delta(c,\mu_i) \subset P^+_c$ for any $i\in \{1, \dots, s\}$,
\item $\Delta(0,\mu_i)\subset P^-_{c-1}$ for any $i\in \{s+1, \dots, k\}$.
\end{itemize}
Then $\mathcal{L}_P(\mu_1, \dots, \mu_k)$ is toric non-special.
\end{theorem}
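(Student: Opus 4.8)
The plan is to use the toric degeneration $X(\mathcal{P}_{c-1}) \to \kk$ together with the semicontinuity of the dimension of linear systems under this degeneration, and then to match the ranks of the matrices $M(P^-_{c-1}, \mu_1, \dots, \mu_s)$, $M(P^+_{c-1}, \mu_{s+1}, \dots, \mu_k)$, $M(P^+_c, \mu_{s+1}, \dots, \mu_k)$ and $M(P^-_c, \mu_1, \dots, \mu_s)$ to the corresponding toric expected dimensions via Theorem~\ref{coxlinear}. First I would make precise the way the degeneration acts on sections: the Riemann--Roch space of $D_P$ is indexed by the lattice points $P \cap M$, and the two degenerations in the remark split $P \cap M$ as $(P^-_{c-1} \cap M) \cup (P^+_{c-1} \cap M)$ with overlap on the slice $m_i = c-1$, and likewise as $(P^-_c \cap M) \cup (P^+_c \cap M)$ with overlap on $m_i = c$. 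Specializing $s$ of the points onto the component $X(P^-_{c-1})$ (respectively $X(P^+_{c-1})$) and the remaining $k-s$ onto the other component, a section of the central fiber that vanishes to the prescribed orders restricts to sections of $\mathcal{L}_{P^-_{c-1}}(\mu_1, \dots, \mu_s)$ and $\mathcal{L}_{P^+_{c-1}}(\mu_{s+1}, \dots, \mu_k)$, and conversely a pair of such sections that agree on the gluing locus lifts. This yields the inequality
\[
\dim(\mathcal{L}_P(\mu_1, \dots, \mu_k)) \leq \dim(\mathcal{L}_{P^-_{c-1}}(\mu_1, \dots, \mu_s)) + \dim(\mathcal{L}_{P^+_{c-1}}(\mu_{s+1}, \dots, \mu_k)) + 1,
\]
and similarly with $c-1$ replaced by $c$.

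Next I would compute the right-hand sides using the hypotheses. By Theorem~\ref{coxlinear} and the toric non-speciality of $\mathcal{L}^+$ and $\mathcal{L}^-$, together with the observation that $ce_i$ is transitive on $P^+_c$ and the origin on $P^-_{c-1}$ (so that the matrices $M(P^+_c, \cdot)$ and $M(P^-_{c-1}, \cdot)$ are of the form covered by Notation~\ref{notmat} after the appropriate translation by $ce_i$), each of the four linear systems above has dimension equal to its toric expected dimension. The conditions $\Delta(c, \mu_i) \subset P^+_c$ for $i \leq s$ and $\Delta(0, \mu_i) \subset P^-_{c-1}$ for $i > s$ are exactly what guarantees that the toric corrections $\Delta(P^+_c, \mu_i)$ and $\Delta(P^-_{c-1}, \mu_i)$ are the full simplices $\Delta(c, \mu_i)$ and $\Delta(\mu_i)$, so that $\ttvdim$ of the degenerate systems is controlled purely by lattice-point counts $|P^\pm_{c-1} \cap M|$, $|P^\pm_c \cap M|$ and the binomial sums. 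One then checks the combinatorial identity
\[
(|P^-_{c-1} \cap M| - \textstyle\sum_{i=1}^s |\Delta(P^-_{c-1}, \mu_i)|) + (|P^+_c \cap M| - \sum_{i=s+1}^k |\Delta(P^+_c, \mu_i)|) = \dim([D_P]) - \sum_{i=1}^k |\Delta(P, \mu_i)|,
\]
using that $P \cap M$ is partitioned by the two slabs $\{m_i \leq c-1\}$ and $\{m_i \geq c\}$ and that under the stated containments the simplex $\Delta(P, \mu_i)$ decomposes compatibly (for $i \leq s$ the part with $m_i \geq c$ matching $\Delta(P^+_c,\mu_i)$ after translation, for $i > s$ the part with $m_i \leq c-1$ matching $\Delta(P^-_{c-1}, \mu_i)$). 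The sign condition $(\ttvdim(\mathcal{L}^+)+1)(\ttvdim(\mathcal{L}^-)+1) \geq 0$ is needed precisely to rule out the degenerate case where one of the two pieces is empty ($\ttvdim = -1$) while the other has positive dimension, in which the naive addition of dimensions and the $+1$ would overcount; when both are $\geq 0$, or both are $= -1$, the bound $\dim(\mathcal{L}^-) + \dim(\mathcal{L}^+) + 1 = \ttvdim(\mathcal{L}^-) + \ttvdim(\mathcal{L}^+) + 1$ holds cleanly.

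Combining, I get $\dim(\mathcal{L}_P(\mu_1, \dots, \mu_k)) \leq \ttvdim(\mathcal{L}_P(\mu_1, \dots, \mu_k))$ (or $\leq -1$ in the empty case), while Theorem~\ref{coxlinear} gives the reverse inequality $\dim(\mathcal{L}_P(\mu_1, \dots, \mu_k)) \geq \ttedim(\mathcal{L}_P(\mu_1, \dots, \mu_k))$ always; hence equality, i.e.\ $\mathcal{L}_P(\mu_1, \dots, \mu_k)$ is toric non-special. I expect the main obstacle to be the semicontinuity step: one must verify carefully that specializing the $k$ points to the two components of the central fiber of $X(\mathcal{P}_{c-1})$ is actually achievable within the family while keeping the points in very general position on each component, and that a section of $\mathcal{O}(D_P)$ on the total space restricting to vanishing-to-order-$\mu_i$ sections on each component does glue correctly — in particular that there is no obstruction coming from the gluing locus $X(P^-_{c-1} \cap P^+_{c-1})$ forcing the inequality to be strict. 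Handling the overlap slices $m_i = c-1$ and $m_i = c$ correctly (they are counted in both $P^-$ and $P^+$, so the naive sum of lattice-point counts exceeds $|P \cap M|$ by the size of the slice, which is exactly compensated by the $+1$'s and the structure of the two separate degenerations at levels $c-1$ and $c$) is the delicate bookkeeping point, and is the reason both degenerations — not just one — enter the hypotheses.
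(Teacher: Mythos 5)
Your route is genuinely different from the paper's: the paper never degenerates the variety at all, but works directly with the interpolation matrix $M(P,\mu_1,\dots,\mu_k)$, splits its columns according to the partition $P\cap M=(P^+_c\cap M)\sqcup(P^-_{c-1}\cap M)$ and its rows according to the two groups of points, introduces a grading on the coordinates of the points, and shows by Laplace expansion that the top-degree part of a suitable maximal minor equals $\det(M'_1)\det(M'_2)$, where the diagonal blocks have maximal rank by Theorem~\ref{coxlinear} and the assumed toric non-speciality; the containments $\Delta(c,\mu_i)\subset P^+_c$ ($i\le s$) and $\Delta(0,\mu_i)\subset P^-_{c-1}$ ($i>s$) are what make the off-diagonal blocks contribute strictly smaller degree, and the sign condition on the two toric virtual dimensions is what allows the two maximal minors to be assembled into one square minor of the correct size. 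The geometric degeneration only appears in the paper as motivation.

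Your version, however, has a genuine gap at its quantitative core. The inequality you extract, $\dim\mathcal{L}_P\le\dim\mathcal{L}_{P^-_{c-1}}(\mu_1,\dots,\mu_s)+\dim\mathcal{L}_{P^+_{c-1}}(\mu_{s+1},\dots,\mu_k)+1$, is obtained by discarding the matching condition along the gluing locus that you yourself state, and it is too weak by exactly the number of lattice points on the slice $\{m_i=c-1\}$: since $|P^-_{c-1}\cap M|+|P^+_{c-1}\cap M|=|P\cap M|+\#\{m\in P\cap M\mid m_i=c-1\}$, even if both systems on the right were non-special the bound exceeds $\ttvdim(\mathcal{L}_P)$ by the slice. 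The second degeneration at level $c$ has the same defect at $\{m_i=c\}$, and two inequalities each weaker than needed cannot be combined to give the bound $\dim\mathcal{L}^-+\dim\mathcal{L}^++1$ you actually need; to save the geometric argument you must keep the matching condition, i.e.\ use the kernel-of-restriction sequence on one component, which is precisely why the theorem pairs the asymmetric polytopes $P^-_{c-1}$ and $P^+_c$, whose lattice points partition $P\cap M$ with no overlap (one degeneration then suffices). In addition, two of your intermediate claims are not among the hypotheses: toric non-speciality is assumed only for $\mathcal{L}_{P^-_{c-1}}(\mu_1,\dots,\mu_s)$ and $\mathcal{L}_{P^+_c}(\mu_{s+1},\dots,\mu_k)$, not for the systems on $P^+_{c-1}$ and $P^-_c$ that enter your inequalities; and the containment hypotheses attach the multiplicities $\mu_i$ with $i\le s$ to $P^+_c$ and those with $i>s$ to $P^-_{c-1}$, i.e.\ to the piece opposite to the one you specialize those points onto, so they do not say that the toric corrections entering $\ttvdim(\mathcal{L}^\pm)$ are full simplices, and the lattice-point identity you sketch does not close up as stated. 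Concretely, for $i\le s$ the correction $\Delta(\mu_i)\cap P^-_{c-1}$ of your minus-side system can be strictly smaller than $\Delta(P,\mu_i)$, which pushes the final comparison with $\ttvdim(\mathcal{L}_P)$ in the wrong direction.
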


\begin{proof}
Consider the matrix $M(P, \mu_1, \dots, \mu_k)$
and reorder the rows and columns 
such that the first columns corresponds to the points of $P^+_c$
and the first rows corresponds to the conditions of the points
$p_1, \dots, p_s$.
Writting 
\[ p_i = ( p_{i,1}, \dots, p_{i,n} ) \in \kk^n\] 
for the coordinates
of $p_i$, we can consider the entries of the matrix 
$M(P,\mu_1, \dots, \mu_k)$ as polynomials in 
\[
\mathcal{R}= \kk[p_{1,1}, \dots, p_{1,n}, \dots, p_{k,1}, \dots, p_{k,n}].
\]
We define a grading on $\mathcal{R}$ defined by
$\deg(p_{i,j})=1$ if $j=n$ and $i\geq s+1$
and $\deg(p_{i,j})=0$ otherwise.
Moreover, we can write
\[
M(P, \mu_1, \dots, \mu_r) =
\begin{bmatrix} M_1 & K_1 \\
K_2  & M_2
\end{bmatrix}
\]
and applying Theorem~\ref{coxlinear}
to the toric varieties $X(P^+_c)$ and $X(P^-_{c-1})$ with the corresponding
divisor associated to $P^+_c$ and $P^-_{c-1}$ respectively, we conclude that 
$M_1$ and $M_2$
have maximal rank.
Without loss of generality
we assume that we can obtain a square submatrix 
$M'_1$ of $M_1$
and a square submatrix $M'_2$ of
$M_2$ by deleting columns.
We denote by $M'$ the square submatrix of $M(P,\mu_1, \dots, \mu_r)$
obtained by deleting such columns,
then we can write
\[
M' =
\begin{bmatrix} M'_1 & K'_1 \\
K'_2  & M'_2
\end{bmatrix}.
\]
We have that $\deg( \det( M'_2))>\deg(\det (B))$
for any square submatrix $B$ of $[K'_2 \quad M'_2]$.
Thus, by Laplace expansion with respect to the rows
of $M_1$ we conclude that $\deg( \det(M')) = \deg( \det(M'_1)\det(M'_2))>0$,
and the result follows.
\end{proof}

\begin{example}
Let $X(\Sigma_1)$ be the Hirzebruch surface of 
Picture~\ref{fig:rect},
consider the linear system
$\mathcal{L}=\mathcal{L}_{[nD_3 +mD_4]}(m+1,\dots, m+1)$
passing through $\lfloor \frac{n}{m+1} \rfloor$ points in very general position,
where $D_3$ is the $\mathbb{T}$-invariant divisor corresponding to the ray
$(1,1)$ and $D_4$ is the $\mathbb{T}$-invariant divisor corresponding
to the ray $(0,1)$. 
Using Theorem~\ref{degeneration} we can see that $\mathcal{L}$
is special but is toric non-special.
Moreover, we can conclude 
that each point contributes $1$ to the speciality,
and is given by the image, via an automorphism, of a $\mathbb{T}$-invariant curve
passing through the very general point.

In $\mathbb{P}^n$ the toric non-speciality coincide with the standard non-speciality, 
therefore any special linear system on $\mathbb{P}^n$ will be toric non-special 
as well.

In $(\mathbb{P}^1)^n$ the toric contribution corresponds to the fibers
of the morphisms $(\mathbb{P}^1)^n\rightarrow (\mathbb{P}^1)^s, 1\leq s\leq n-1$.
In this case, the linear system $\mathcal{L}_{[D_1+\dots +D_7]}(3,3,3)$ on $(\mathbb{P}^1)^7$
is toric special, the difference between the dimension and the toric expected dimension is $1$
and this contribution is given by certain non-toric subvariety passing through the $3$ points.
\end{example}

\begin{bibdiv}
\begin{biblist}

\bib{ADHL}{book}{
   author={Arzhantsev, Ivan},
   author={Derenthal, Ulrich},
   author={Hausen, J{\"u}rgen},
   author={Laface, Antonio},
   title={Cox rings},
   series={Cambridge Studies in Advanced Mathematics},
   volume={144},
   publisher={Cambridge University Press, Cambridge},
   date={2015},
   pages={viii+530},
   isbn={978-1-107-02462-5},
   review={\MR{3307753}},
}

\bib{Baz13}{article}{
   author={Bazhov, Ivan},
   title={On orbits of the automorphism group on a complete toric variety},
   journal={Beitr. Algebra Geom.},
   volume={54},
   date={2013},
   number={2},
   pages={471--481},
   issn={0138-4821},
   review={\MR{3095734}},
   doi={10.1007/s13366-011-0084-0},
}

\bib{BDP}{article}{
AUTHOR = {Brambilla, Maria Chiara},
AUTHOR = {Dumitrescu, Olivia},
AUTHOR = {Postinghel, Elisa},
TITLE = {On a notion of speciality of linear systems in $\pp^n$},
    journal={Trans. Amer. Math. Soc.},
     YEAR = {2014},
   pages={1--27},
   doi={10.1090/S0002-9947-2014-06212-0},    
}

\bib{Cox92}{article}{
   author={Cox, David A.},
   title={Erratum to ``The homogeneous coordinate ring of a toric variety''
   [MR1299003]},
   journal={J. Algebraic Geom.},
   volume={23},
   date={2014},
   number={2},
   pages={393--398},
   issn={1056-3911},
   review={\MR{3166395}},
   doi={10.1090/S1056-3911-2013-00651-7},
}

\bib{CLS}{book}{
   author={Cox, David A.},
   author={Little, John B.},
   author={Schenck, Henry K.},
   title={Toric varieties},
   series={Graduate Studies in Mathematics},
   volume={124},
   publisher={American Mathematical Society, Providence, RI},
   date={2011},
   pages={xxiv+841},
   isbn={978-0-8218-4819-7},
   review={\MR{2810322 (2012g:14094)}},
   doi={10.1090/gsm/124},
}

\bib{Dum10}{article}{
   author={Dumnicki, Marcin},
   title={Special homogeneous linear systems on Hirzebruch surfaces},
   journal={Geom. Dedicata},
   volume={147},
   date={2010},
   pages={283--311},
   issn={0046-5755},
   review={\MR{2660581 (2012a:14011)}},
   doi={10.1007/s10711-009-9455-1},
}

\bib{Gi}{book}{
   author={Gimigliano, Alessandro},
   title={On linear systems of plane curves},
   note={Thesis (Ph.D.)--Queen's University (Canada)},
   publisher={ProQuest LLC, Ann Arbor, MI},
   date={1987},
   pages={(no paging)},
   isbn={978-0315-38458-3},
   review={\MR{2635606}},
}

\bib{Ha}{article}{
   author={Harbourne, Brian},
   title={Complete linear systems on rational surfaces},
   journal={Trans. Amer. Math. Soc.},
   volume={289},
   date={1985},
   number={1},
   pages={213--226},
   issn={0002-9947},
   review={\MR{779061 (86h:14030)}},
   doi={10.2307/1999697},
}

\bib{Hi}{article}{
   author={Hirschowitz, Andr{\'e}},
   title={Une conjecture pour la cohomologie des diviseurs sur les surfaces
   rationnelles g\'en\'eriques},
   language={French},
   journal={J. Reine Angew. Math.},
   volume={397},
   date={1989},
   pages={208--213},
   issn={0075-4102},
   review={\MR{993223 (90g:14021)}},
   doi={10.1515/crll.1989.397.208},
}

\bib{LU2}{article}{
   author={Laface, Antonio},
   author={Ugaglia, Luca},
   title={On a class of special linear systems of $\Bbb P^3$},
   journal={Trans. Amer. Math. Soc.},
   volume={358},
   date={2006},
   number={12},
   pages={5485--5500 (electronic)},
   issn={0002-9947},
   review={\MR{2238923 (2007e:14009)}},
   doi={10.1090/S0002-9947-06-03891-8},
}

\bib{LU1}{article}{
   author={Laface, Antonio},
   author={Ugaglia, Luca},
   title={Standard classes on the blow-up of $\Bbb P^n$ at points in very
   general position},
   journal={Comm. Algebra},
   volume={40},
   date={2012},
   number={6},
   pages={2115--2129},
   issn={0092-7872},
   review={\MR{2945702}},
   doi={10.1080/00927872.2011.573517},
}

\bib{LM14}{article}{
author={Laface, Antonio},
author={Moraga, Joaqu\'in},
title={Linear systems on the blow-up of $(\pp^1)^n$},
journal={Linear Algebra and its Applications},
volume={492},
pages={52-67},
year={2016},
issn={0024-3795},
doi={http://dx.doi.org/10.1016/j.laa.2015.11.009},
}

\bib{Se}{article}{
   author={Segre, Beniamino},
   title={Alcune questioni su insiemi finiti di punti in geometria
   algebrica. },
   language={Italian},
   conference={
      title={Atti Convegno Internaz. Geometria Algebrica},
      address={Torino},
      date={1961},
   },
   book={
      publisher={Rattero, Turin},
   },
   date={1962},
   pages={15--33},
   review={\MR{0146714 (26 \#4234)}},
}

\end{biblist}
\end{bibdiv}
\end{document}